\theoremstyle{plain}
\newtheorem{theorem}{Theorem}[section]
\newtheorem{lemma}[theorem]{Lemma}
\newtheorem{definition}[theorem]{Definition}
\newtheorem{corollary}[theorem]{Corollary} 
\newtheorem{proposition}[theorem] {Proposition}
\newcommand{\real}{\ensuremath{\mathbb{R}}}
\newcommand{\cone}{\ensuremath{\mathbb{C}}}
\newcommand{\homol}{\ensuremath{\mathbb{H}}}
\newcommand{\lp}{\ensuremath{\mathbb{L}_p}}
\newcommand{\mm}[2]{\ensuremath{\mathbb{#1}_{#2}}}
\newcommand{\borel}{\ensuremath{\mathcal{P}}}
\newcommand{\diagram}{\ensuremath{\mathcal{D}}}
\newcommand{\chemin}{\ensuremath{\mathscr{C}}}
\newcommand{\chemind}{\ensuremath{\mathscr{C}_D}}
\newcommand{\gw}{\ensuremath{\mathscr{M}}}
\newcommand{\fgh}{\ensuremath{\mathscr{F}}}
\newcommand{\ecc}[1]{\ensuremath{\Theta_{#1}}}
\newcommand{\prer}{\ensuremath{d_{HT}}}
\newcommand{\var}{\ensuremath{\hat{d}_{HT}}}
\begin{document}

\renewcommand\thelinenumber{\color[rgb]{0.2,0.5,0.8}\normalfont\sffamily\scriptsize\arabic{linenumber}\color[rgb]{0,0,0}}
\renewcommand\makeLineNumber {\hss\thelinenumber\ \hspace{6mm} \rlap{\hskip\textwidth\ \hspace{6.5mm}\thelinenumber}}

%\linenumbers

\title{A Topological Study of Functional Data and Fr\'{e}chet Functions of Metric Measure
Spaces\thanks{Research supported in part by NSF grants DMS-1722995 and DMS-1723003.}}
\author{Haibin Hang$^\dag$, Facundo M\'{e}moli$^\ddag$, Washington Mio$^\dag$\\
\small $^\dag$Department of Mathematics, Florida State University\\ 
\small Tallahassee, FL 32306-4510 USA\\
\small hhang@math.fsu.edu \qquad wmio@fsu.edu\\
\small $^\ddag$Department of Mathematics and Department of Computer Science and Engineering\\
\small Ohio State University, Columbus, OH 43210-1174 USA\\
\small memoli@math.osu.edu}

\date{}

\maketitle
%%%%%%%%%%%%%%%%%%%%%%%%%%%%%%%%%%%%%%%%%%%%%%%%%%%%%%%%%%
%%%%%%%%%%%%%%%%%%%%%%%%%%%%%%%%%%%%%%%%%%%%%%%%%%%%%%%%%%
%%%%%%%%%%%%%%%%%%%%%%%%%%%%%%%%%%%%%%%%%%%%%%%%%%%%%%%%%%

{\bf Keywords:} persistent homology, functional data, metric-measure spaces.

\abstract
{We study the persistent homology of both functional data on compact topological spaces
and structural data presented as compact metric measure spaces. One of our goals is to
define persistent homology so as to capture primarily properties of the shape
of a signal, eliminating otherwise highly persistent homology classes that may exist simply because
of the nature of the domain on which the signal is defined.  We investigate the stability of these
invariants using metrics that downplay regions where signals are weak. The
distance between two signals is small if they exhibit high similarity in regions
where they are strong, regardless of the nature of their full domains, in particular allowing
different homotopy types. Consistency  and estimation of persistent homology of metric measure
spaces from data are studied within this framework. We also apply the methodology to the
construction of multi-scale topological descriptors for data on compact Riemannian manifolds via
metric relaxations derived from the heat kernel.
}

%%%%%%%%%%%%%%%%%%%%%%%%%%%%%%%%%%%%%%%%%%%%%%%%%%%%%%%%%%
%%%%%%%%%%%%%%%%%%%%%%%%%%%%%%%%%%%%%%%%%%%%%%%%%%%%%%%%%%
%%%%%%%%%%%%%%%%%%%%%%%%%%%%%%%%%%%%%%%%%%%%%%%%%%%%%%%%%%
\section{Introduction}

This paper investigates ways of producing robust, informative summaries of both 
functional data on topological spaces and structural data in metric spaces, two
problems that permeate the sciences and applications. Many problems involving 
structural data may be formulated in the realm of metric-measure spaces
($mm$-spaces) $\mm{X}{\alpha} = (X, d_X, \alpha)$, where $(X,d_X)$ is
a metric space and $\alpha$ is a Borel
probability measure on $X$. For example, a dataset $x_1, \ldots, x_n \in X$
may be analyzed via the associated empirical measure
$\sum_{i=1}^n \delta_{x_i} /n$, where $\delta_{x_i}$ is the Dirac measure
based at $x_i$. 

The mean of a Euclidean distribution is a most basic statistic that
may be generalized to triples $(X,d_X,\alpha)$ via the Fr\'{e}chet function
$V_{\mm{X}{\alpha}} \colon X \to \real$ defined by
\begin{equation}
V_{\mm{X}{\alpha}} (x) = \int_X d^2(x, y) \, d \alpha (y)\,.
\label{E:frechet}
\end{equation}
In the Euclidean case, it is well known that the mean is the unique
minimizer of $V_{\mm{X}{\alpha}}$ , provided that $\alpha$ has finite second moment.
In the more general setting, a {\em Fr\'{e}chet mean} is a minimizer of $V_{\mm{X}{\alpha}} $,
not necessarily unique, thus leading to the concept of Fr\'{e}chet mean set.
One also may consider the local minima of $V_{\mm{X}{\alpha}}$ that often yield valuable
additional information about the distribution. The Fr\'{e}chet mean has
received a great deal of attention from  many authors (cf.\,\cite{gk73,patbat03,arn-bar13}).  
However, Fr\'{e}chet mean sets may be difficult to estimate or compare
quantitatively, limiting its applicability in data analysis.

Beyond means, a wealth of structural information about $\mm{X}{\alpha}$
typically resides in $V_{\mm{X}{\alpha}} $ (cf.\,\cite{diazetal18,diazetal18a}). One of our
primary goals is to carry out a topological study of the Fr\'{e}chet function via persistent
homology to uncover and summarize properties
of the shape of probability distributions on metric spaces. The study is
done in the general setting of Fr\'{e}chet functions of order
$p\geq1$, defined as
\begin{equation}
V_{p,\mm{X}{\alpha}} (x) = \int_X d^p(y,x) \, d \alpha (y)\,.
\label{E:frechetp}
\end{equation}
\noindent
Some authors refer to $V_{p, \mm{X}{\alpha}}$ as the $p$-eccentricity
function of $\mm{X}{\alpha}$ (cf.\,\cite{carlsson09}). 
Closely related to $V_{p, \mm{X}{\alpha}}$ is the function
$\sigma_{p, \mm{X}{\alpha}} \colon X \to \real$ defined as
\begin{equation}
\sigma_{p, \mm{X}{\alpha}} (x) = (V_{p, \mm{X}{\alpha}} (x))^{1/p} \,,
\label{E:ecc}
\end{equation}
which we refer to as the {\em $p$-centrality function} of $\mm{X}{\alpha}$.
We opt to construct topological summaries for $\mm{X}{\alpha}$
using $\sigma_{p, \mm{X}{\alpha}}$, rather than $V_{p, \mm{X}{\alpha}}$, because barcodes
or persistence diagrams derived from $p$-centrality are
more amenable to analysis. A key property of Fr\'{e}chet and centrality
functions is that they attenuate the influence of outliers in persistence
homology computed from large random samples of a distribution, a fact
that is implicit in the stability and consistency results proven in this paper.

We view $\sigma_{p, \mm{X}{\alpha}} \colon X \to \real$ as a
``signal'' on $X$ and first work in the more general setting of functional data
on compact topological spaces. Subsequently, we specialize to centrality
functions of $mm$-spaces. In the functional setting, a data object is a triple
$\mm{X}{f} :=(X, \tau_X, f)$, where $(X, \tau_X)$ is a topological space and
$f \colon X \to \real$ is a continuous function. The collection of all such triples
is denoted $\fgh$. We make the convention that large values of $f$ correspond
to weak signals. Generally, this is more consistent with the behavior of Fr\'{e}chet
functions that attain larger values in data deserts, far away from
concentrations of probability mass (cf.\,\cite{diazetal18}). 

In defining persistent homology \cite{frosini92, robins99,edelsbrunner-etal,
carlsson09,edehar08,cohenetal07} invariants for $\mm{X}{f}$, there
are a few drawbacks in following the standard procedure of using the filtration of $X$
given by the sublevel sets of $f$ directly. For example, a region of non-trivial topology
where the signal $f$ is weak might contribute highly persistent homology classes,
masking the ``real'' topology of the signal and producing a confounding effect.
A closely related problem is that, once we reach the maximum value of $f$,
the sublevel sets of $f$ coincide with the full space $X$, so that the global homology of
$X$ is the dominant information captured in a homological barcode
regardless of the ``support'' of the signal. The number of bars of infinite length for
$i$-dimensional homology is the $i$th Betti number of $X$.
At the barcode level, we could alleviate the problem by trimming
barcodes at the maximum value of $f$. However, stability results
for truncated barcodes obtained by factoring through the stability of persistent homology
of the usual sublevel set filtration would be somewhat weak because it would
require similarity of the barcodes prior to truncation. 
Thus, we introduce a metric on the space of functional topological spaces
with respect to which small distances indicate similarity where signals are strong. We
investigate the stability of persistent homology in this setting. We take the
homotopy type distance $d_{HT}$, introduced by Frosini et al. in \cite{frosinietal17},
as well as a slight variant of it, as our point of departure. A stability theorem is proven in
\cite{frosinietal17} for persistent homology (of sublevel set filtrations)
with respect to $d_{HT}$ for functional data defined on domains
with the same homotopy type. We combine $d_{HT}$ with a topological {\em cone construction}, 
used as a counterpart to barcode trimming at the level of functional spaces, that
has the virtue of making all domains contractible, so that the stability theorem of \cite{frosinietal17}
becomes applicable to functional data with arbitrary domains. Coning also
allows us to highlight the topology of regions where signals are strong and downplay topological
differences where signals are subdued. Previously,
Cerri et al.\,have used a related cone construction in the study of Betti numbers
for multidimensional persistent homology \cite{cerrietal08}.
We also define a metric on the space of $mm$-spaces that
downplays topological differences in regions far away from sizeable probability
mass and prove stability and consistency theorems for persistent homology
of centrality functions.

Functional and structural data on domains of different homotopy types are
commonplace in practice. For example, functional data acquired through imaging
often contain many topological defects in their domains that may require
a significant amount of image processing prior to analysis. The proposed
method allows us to easily deal with those defects and bypass such
pre-processing steps, provided that the defects do not occur near the ``support''
of the signal. It is also common for analysis of functional data to involve
delicate registration steps, especially in situations where images only partially
correspond. The present approach circumvents registration steps, still yielding
informative data summaries, provided that the ``supports'' of the signals
are fully captured by the images. In summary, if the images in a dataset capture the
regions where the important information resides, little image processing is
needed. 
%-------------------------
\begin{figure}[ht]
\begin{center}
\begin{tabular}{ccc}
\begin{tabular}{c}
\includegraphics[width=0.25\linewidth]{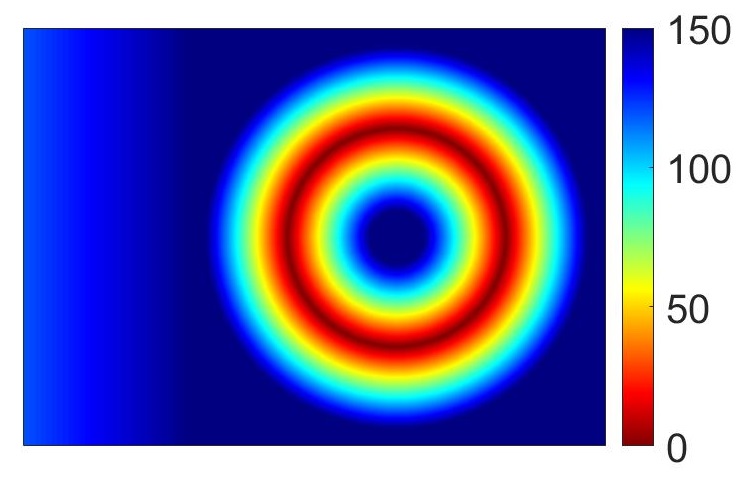} 
\end{tabular}
&
\begin{tabular}{c}
\includegraphics[width=0.25\linewidth]{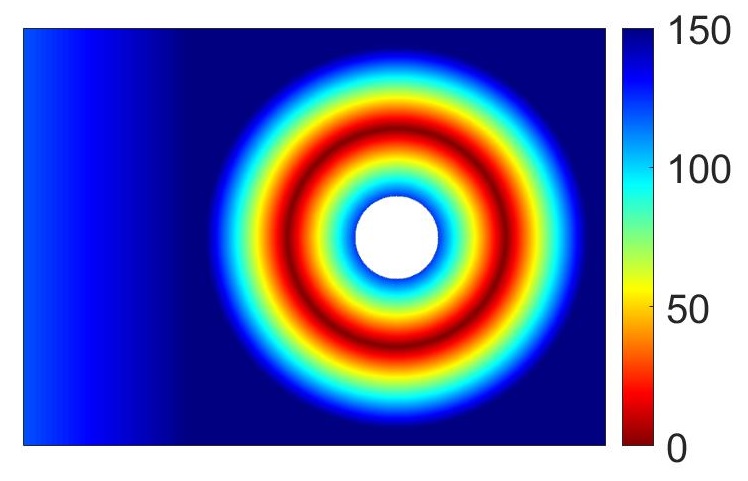}
\end{tabular}
&
\begin{tabular}{c}
\includegraphics[width=0.25\linewidth]{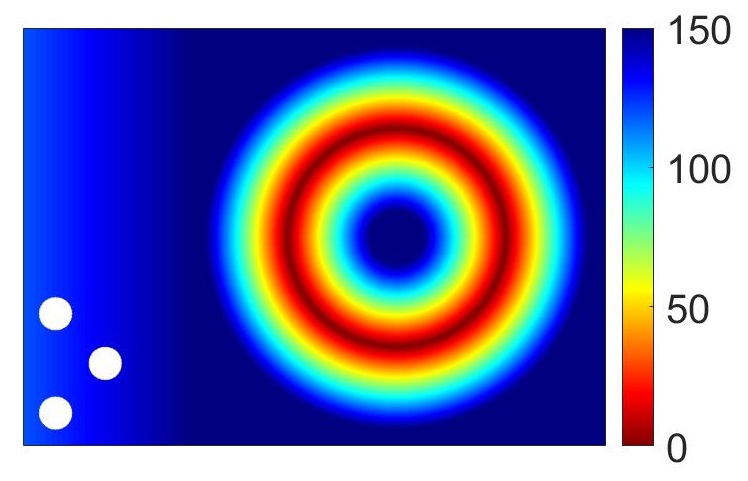} 
\end{tabular} \\
\begin{tabular}{c}
\includegraphics[width=0.25\linewidth]{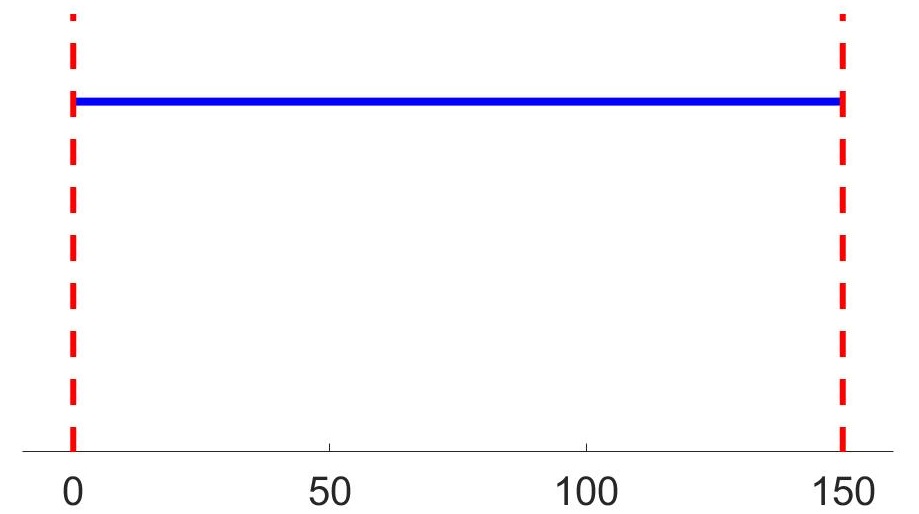} 
\end{tabular}
&
\begin{tabular}{c}
\includegraphics[width=0.25\linewidth]{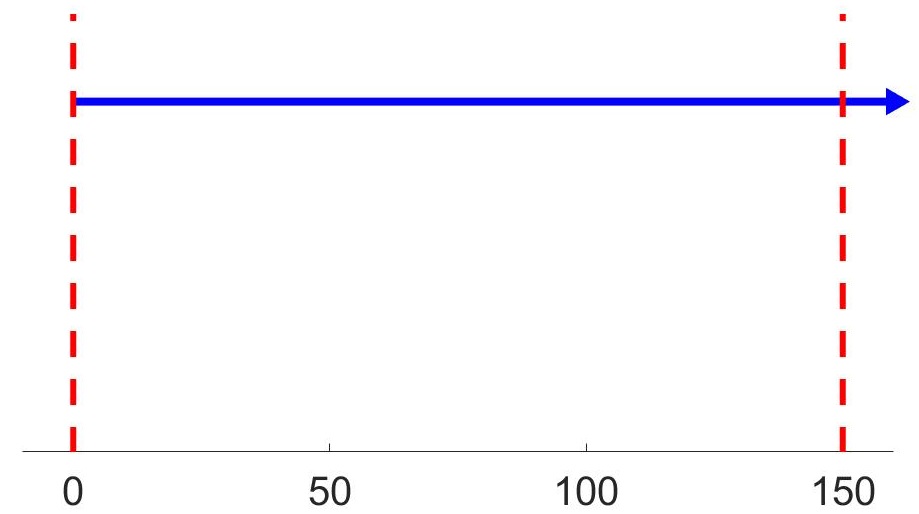}
\end{tabular}
&
\begin{tabular}{c}
\includegraphics[width=0.25\linewidth]{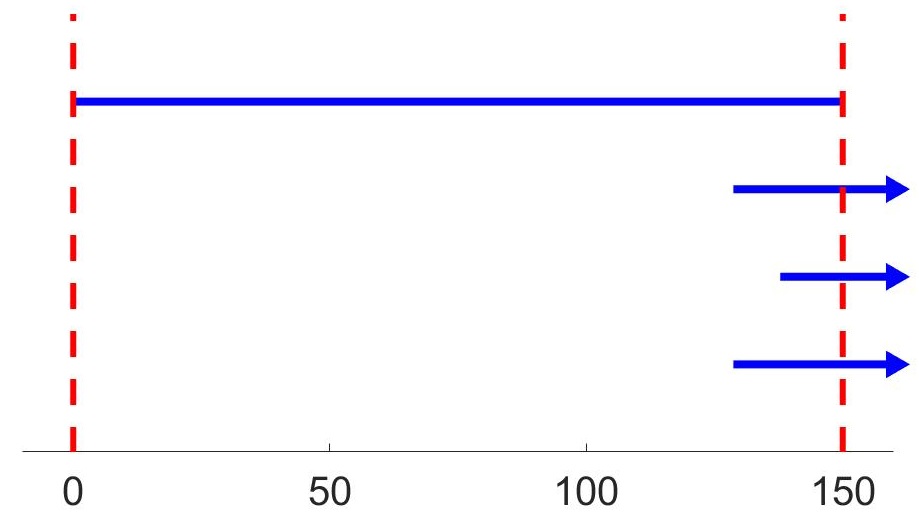} 
\end{tabular} \\
\begin{tabular}{c}
\includegraphics[width=0.25\linewidth]{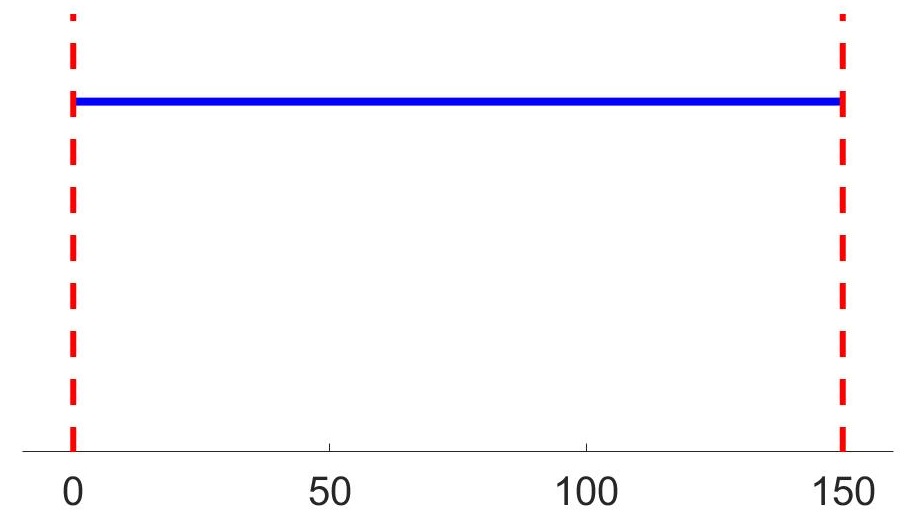} 
\end{tabular}
&
\begin{tabular}{c}
\includegraphics[width=0.25\linewidth]{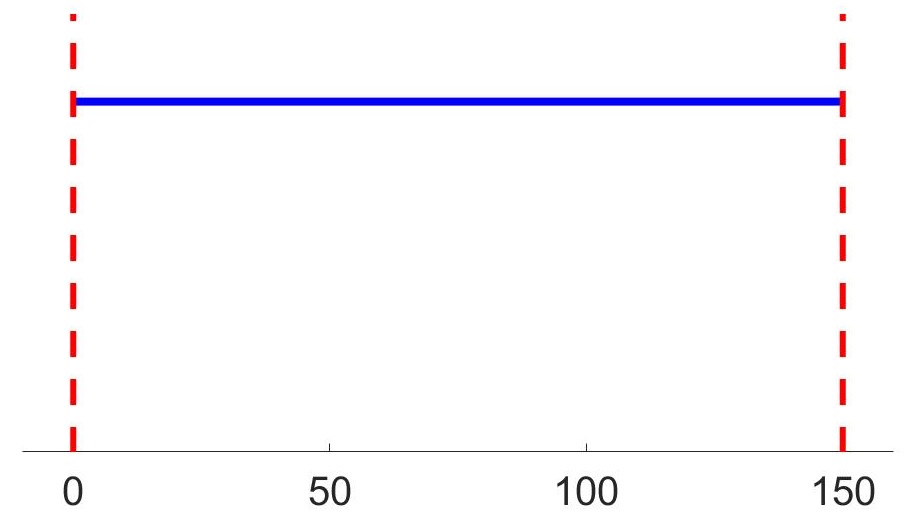}
\end{tabular}
&
\begin{tabular}{c}
\includegraphics[width=0.25\linewidth]{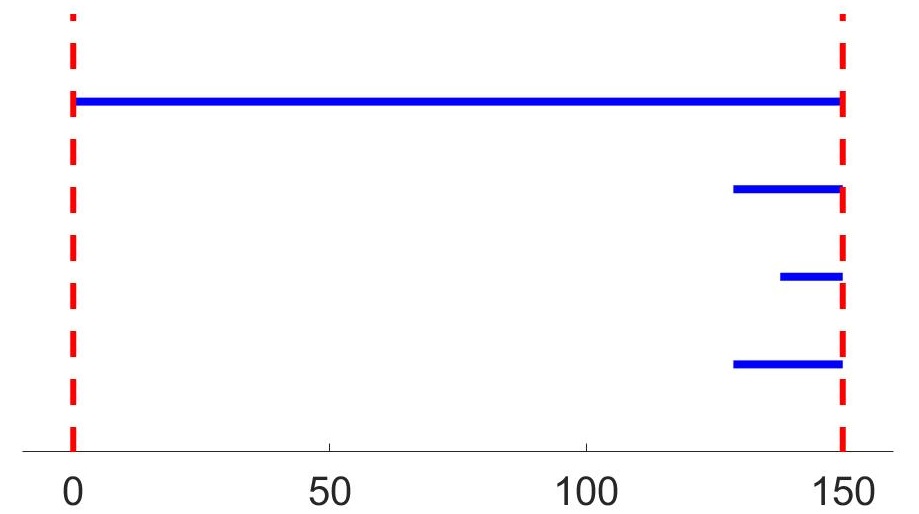} 
\end{tabular} \smallskip \\
(i) & (ii) & (iii)
\end{tabular}
\end{center}
\caption{The first row shows (i) a ``ground truth'' image on a rectangle $R$,
(ii) the image restricted to $R$ with an open disc removed, and (iii) the image with
three noisy holes in its domain. The second row panels display the corresponding
barcodes for 1-dimensional persistent homology, calculated using the images as filtering
functions. The third row shows the barcodes trimmed at the maximum values of the
functions. The arrows indicate bars of infinite length.}
\label{F:barcodes}
\end{figure}
%----
As an illustration, we simulate this type of data.
Fig.\,\ref{F:barcodes} shows signals with very similar shapes,
but defined on domains of different homotopy types. The first row shows heat
maps of (i) a ``ground truth'' function defined on a rectangle $R$ and with a
circular shape, (ii) the same function restricted to $R$ with an open ball removed
that is ``encircled'' by the signal, and (iii) the same function on $R$ with three noisy
holes located in a region where the signal is subdued. The function labeled (i)
has global minima along a circle and the center of that circle is a maximum.
(Recall that we made the convention that low values correspond to strong
signals.) The second row shows the barcodes for 1-dimensional
homology obtained from the sublevel set filtrations induced by the images.
The barcode for the ground truth image comprises a single bar of finite
persistence whose birth-death coordinates $(b,d)$ are the minimum and
maximum values of the function, respectively. These values are highlighted
by vertical dashed lines. For image (ii), we have a single bar with the same birth
coordinate $b$, but of infinite length because the signal wraps
around the hole in the domain. For image (iii), we have the original
bar of finite length in addition to three infinite bars whose birth coordinates
are close to $d$. Thus, in spite of having three signals of very similar shape, the
bottleneck distance between any pair of barcodes is infinite. The aforementioned
cone construction will have the effect of trimming the barcodes at $d$,
as shown on the third row. This yields three barcodes that lie close together
with respect to the bottleneck distance because the three noisy bars in (c)
are short lived.

It may be instructive to contrast the present approach to persistent homology of
metric measure spaces with the treatment by Blumberg et al. \cite{blumberg14}. A basic
philosophical difference is that whereas we define the persistent homology of a $mm$-space
$\mm{X}{\alpha}$ directly via centrality functions and prove stability and consistency
results, Blumberg et al. consider the pushforward of the product measure $\otimes_n \alpha$
on the $n$-fold product space $X \times \ldots \times X$ to barcode space via the map that
associates to a sample of size $n$, the persistent homology of its Vietoris-Rips complex,
proving stability and concentration results in this framework. Thus, their approach is based on
properties of the distribution of barcodes constructed from independent random draws
from the ``theoretical'' distribution $\alpha$.

As an application, we construct multi-scale persistent homology descriptors for
distributions $\alpha$ on a compact Riemannian manifold $M$. We denote the
geodesic distance on $M$ by $d_M$. Using diffusion distances $d_t$, $t >0$,
associated with the heat kernel on $M$ (cf.\,\cite{coifman,diazetal18}), we obtain metric
relaxations of $\mm{M}{\alpha} = (M, d_M, \alpha)$ via the 1-parameter family $(M, d_t, \alpha)$
of $mm$-spaces. (We recall the definition of $d_t$ in Section \ref{S:manifolds}.) We show
that this gives rise to a continuous path of persistence diagrams via the persistent homology
of their centrality functions. We employ the framework developed for the topological analysis
of $mm$-spaces to prove that this multi-scale topological descriptor is stable with respect
to the Wasserstein distance on the space of Borel measures on $(M, d_M)$.

The rest of the paper is structured as follows. Section \ref{S:ft} develops the
aforementioned metric on the space of functional topological spaces and
Section \ref{S:homology} proves the stability of persistence homology 
with respect to this metric. Section \ref{S:mm} addresses stability and
consistency of persistent homology of $mm$-spaces. Section \ref{S:manifolds}
is devoted to a multi-scale analysis of distributions on Riemannian manifolds
and Section \ref{S:final} closes the paper with some additional discussion.

\section{Functional Topological Spaces} \label{S:ft}

Throughout the paper, we assume that all topological spaces are compact. 
A {\em functional topological space} ($ft$-space) is a triple $\mm{X}{f} =
(X, \tau_X, f)$, where $f \colon X \to \real$ is a continuous function on the
topological space $(X,\tau_X)$.  Two $ft$-spaces $\mm{X}{f}$ and $\mm{Y}{g}$ are
isomorphic if there is a homeomorphism $h \colon X \to Y$ such that $f = g \circ h$. 
The collection of isomorphism classes of $ft$-spaces is denoted
$\fgh$. We abuse notation and denote an element of $\fgh$ as $\mm{X}{f}$.
We begin by reviewing a special case of the homotopy type distance $d_{HT}$ of
\cite{frosinietal17} that will be used in our study of functional data. We also introduce
a slight variant of $\prer$ that has better properties with respect to the proposed
cone construction, as explained in details below. We note that our notation and terminology
differ from that of \cite{frosinietal17}.

\subsection{The Homotopy Type Distance} \label{S:pmetric}

A {\em pairing } $(\phi, \psi)$ between  two topological spaces $(X, \tau_X)$ and
$(Y, \tau_Y)$ is a pair of continuous mappings $\phi \colon X \to Y$ and
$\psi \colon Y \to X$. We denote by $\Pi (X,Y)$ the collection
of all such pairings.

\begin{definition}
Let $h_0, h_1 \colon X \to X$ be continuous mappings
and $H \colon X \times [0,1] \to X$ a homotopy between
$h_0$ and $h_1$; that is, a continuous mapping such that
$H(x,0) = h_0 (x)$ and $H(x,1) = h_1 (x)$,
$\forall x \in X$. 
Given $\varepsilon > 0$ and a continuous function $f \colon X \to \real$, $H$ is
said to be an $\varepsilon$-homotopy from $h_0$ to $h_1$ over $f$ if 
\[
(f \circ H) (x,s)  \leq (f \circ h_0) (x) + \varepsilon \,,
\]
$\forall x \in X$ and $\forall s \in [0,1]$.
\end{definition}

Note that even if $H$ is an $\varepsilon$-homotopy from $h_0$ to $h_1$ over $f$, 
the mapping $\overline{H} (x,t) = H(x,1-t)$ may not be an $\varepsilon$-homotopy from
$h_1$ to $h_0$ over $f$.

\begin{definition} \label{D:pair}

Let $\mm{X}{f} = (X, \tau_X, f)$ and $\mm{Y}{g} = (Y, \tau_Y, g)$ be ft-spaces,
$\varepsilon > 0$, and $(\phi, \psi) \in \Pi(X,Y)$ a pairing between $\mm{X}{f} $ and $\mm{Y}{g}$.
\begin{itemize}
\item[(i)]  $(\phi, \psi)$ is an $\varepsilon$-pairing if $(g \circ \phi) (x) \leq f(x) + \varepsilon$ and
$(f \circ \psi) (y) \leq g(y) + \varepsilon$, $\forall x \in X$ and
$\forall y \in Y$.
\item[(i)]  $(\phi, \psi)$ is a strong $\varepsilon$-pairing if $|(g \circ \phi) (x) - f(x)| \leq \varepsilon$
and $|(f \circ \psi) (y) - g(y)| \leq \varepsilon$, $\forall x \in X$ and
$\forall y \in Y$.
\end{itemize}
Clearly, any strong $\varepsilon$-pairing is an $\varepsilon$-pairing.
\end{definition}

\begin{definition} \label{D:match}

Let $\mm{X}{f}$ and $\mm{Y}{g}$ be ft-spaces and $\varepsilon > 0$.

\begin{itemize}
\item[(i)] An $\varepsilon$-matching between $\mm{X}{f}$ and $\mm{Y}{g}$ is an
$\varepsilon$-pairing $(\phi, \psi)$ that satisfies:
\begin{itemize}
\item[(a)] there is a $2\varepsilon$-homotopy from $I_X$ to $\psi \circ \phi$ over $f$;
\item[(b)] there is a $2\varepsilon$-homotopy from $I_Y$ to $\phi \circ \psi$ over $g$,
\end{itemize}
where $I_X$ and $I_Y$ denote the identity maps of $X$ and $Y$, respectively.

\item[(ii)] A strong $\varepsilon$-matching between $\mm{X}{f}$ and $\mm{Y}{g}$ 
is a strong $\varepsilon$-pairing $(\phi, \psi)$ that satisfies (a) and (b) above.
\end{itemize}
\end{definition}
%---
\noindent
We use the notation $\mm{X}{f} \sim_\varepsilon \mm{Y}{g}$ to indicate that there
is an $\varepsilon$-matching between $\mm{X}{f}$ and $\mm{Y}{g}$.  The notation
 $\mm{X}{f} \approx_\varepsilon \mm{Y}{g}$ indicates the existence of a strong
 $\varepsilon$-matching. Note that $\exists \varepsilon > 0$ such that
 $\mm{X}{f} \sim_\varepsilon \mm{Y}{g}$ if and only if $X$ and $Y$ are homotopy
 equivalent. The same statement holds for strong matchings.

\begin{definition}

Let $\mm{X}{f}, \mm{Y}{g} \in \fgh$ be
$ft$-spaces.
\begin{itemize}
\item[(i)] \rm{(Frosini et al.\,\cite{frosinietal17})} The $\prer$ distance between
$\mm{X}{f}$ and $\mm{Y}{g}$ is defined as $\prer (\mm{X}{f}, \mm{Y}{g}) = \inf \{\varepsilon > 0 \,|\, 
\mm{X}{f} \sim_\varepsilon \mm{Y}{g}\}$, if $X$ and $Y$ are homotopy equivalent, and
$\prer (\mm{X}{f}, \mm{Y}{g}) = \infty$, otherwise.

\item[(ii)] The $\var$ distance is defined as $\var (\mm{X}{f}, \mm{Y}{g}) = \inf \{\varepsilon > 0 \,|\, 
\mm{X}{f} \approx_\varepsilon \mm{Y}{g}\}$, if $X$ and $Y$ are homotopy equivalent, and
$\var (\mm{X}{f}, \mm{Y}{g}) = \infty$, otherwise.
\end{itemize}
\end{definition}
%------
Both $\prer$ and $\var$ define extended pseudo-metrics on $\fgh$, extended meaning that
they may attain the value $\infty$. The triangle inequality is easily verified.

\subsection{A Cone Construction}

As explained in the Introduction, we introduce a cone construction
that gives a counterpart to barcode trimming at the $ft$-space level.
Let $(X, \tau_X)$ be a compact topological space. On the product
$X \times [0,1]$, consider the equivalence relation generated
by $(x, 1) \sim (x', 1)$, $\forall x, x' \in X$. The
cone on $X$ is the quotient space $C_X = X \times [0,1] / \sim$.
The quotient topology is denoted $\tau_{C_X}$.
The cone point is the equivalence class of any $(x,1)$, denoted
$\ast_X = [(x,1)]$. 

\begin{definition} \label{D:signal}
Let $\mm{X}{f} \in \fgh$ be a ft-space. The cone on $\mm{X}{f}$,
denoted $\cone(\mm{X}{f})$, is the ft-space $(C_X, \tau_{C_X}, f^\ast)$,
where $f^\ast \colon C_X \to \real$ is defined by
\[
f^\ast ([(x,t)]) = (1-t) f(x) + t m_f \,,
\]
for $x \in X$ and $0 \leq t \leq1$. Here, $m_f$ is the maximum
value of $f$.
\end{definition}

Note that for any $a < m_f$, the sublevel set $(C_X)_a = (f^\ast)^{-1} (-\infty, a]$
strong deformation retracts along cone lines to $X_a = f^{-1} (-\infty, a]$ and
therefore they have the same homology. Moreover, for $a \geq m_f$, $(C_X)_a = C_X$
which is contractible. Thus, any bar in a barcode whose birth coordinate is close
to $m_f$ will be short-lived, with the possible exception of a
single bar in $H_0$.

Define the {\em cone operator\ } $\cone \colon \fgh \to \fgh$ by
$\cone (\mm{X}{f}) := (C_X, \tau_{C_X}, f^\ast)$ and let $r_\infty$ and $\hat{r}_\infty$
be the pseudo-metrics on $\fgh$ induced by $\prer$ and $\var$, respectively, under
this operator. In other words, 
\begin{equation} \label{E:rinf}
r_\infty (\mm{X}{f}, \mm{Y}{g}) := \prer (\cone (\mm{X}{f}), \cone(\mm{Y}{g})) \,.
\end{equation}
and
\begin{equation} \label{E:rinf1}
\hat{r}_\infty (\mm{X}{f}, \mm{Y}{g}) := \var (\cone (\mm{X}{f}), \cone(\mm{Y}{g})) \,.
\end{equation}

\smallskip
\noindent
{\em Remark.} Since $C_X$ and $C_Y$ are contractible, thus homotopy
equivalent, we have that $r_\infty (\mm{X}{f}, \mm{Y}{g}) < \infty$ and
$\hat{r}_\infty (\mm{X}{f}, \mm{Y}{g}) < \infty$, for any
$\mm{X}{f}, \mm{Y}{g} \in \fgh$.
 
 \smallskip
 
One of the drawbacks in directly using the $\prer$ distance in our analysis of functional data
is that the cone operator, designed to simplify signals, may in fact increase the $\prer$ distance
between $ft$-spaces, making their dissimilarities even more pronounced, as illustrated
by the following example.
 
\smallskip
 
\noindent {\em Example.} Let $X = \{(0,0), (1,0)\}$ and
$Y = \{(0,y) \,|\, 0 \leq y \leq 1\} \cup \{(1,0)\}$ with the topology induced by the
Euclidean distance in $\real^2$. Let $\pi \colon \real^2 \to \real$ denote projection
onto the second coordinate and set $f = \pi |_X$ and $g= \pi |_Y$, so that $f \equiv 0$.
Then, one may show
 that $\prer (\mm{X}{f}, \mm{Y}{g}) = 0$ and $r_\infty (\mm{X}{f}, \mm{Y}{g}) = 
 \prer (\cone (\mm{X}{f}), \cone(\mm{Y}{g})) = 1$. 
 
 \smallskip
 
In spite of the obvious  inequality $\prer (\mm{X}{f}, \mm{Y}{g}) \leq \var (\mm{X}{f}, \mm{Y}{g})$,
coning exhibits a much better  behavior with respect to $\var$, making it a more natural 
metric to adopt in some situations. We close this section by showing
that the cone operator is non-expansive with respect to $\var$. 
 
 \begin{lemma} \label{L:max}
Let $\mm{X}{f}$ and $\mm{Y}{g}$ be ft-spaces. If there is a strong $\varepsilon$-pairing
between $\mm{X}{f}$ and $\mm{Y}{g}$, then
 $|m_f - m_g| \leq \varepsilon$.
 \end{lemma}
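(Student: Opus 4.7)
The plan is to use the two-sided bound in the definition of a strong $\varepsilon$-pairing together with compactness, which ensures that $f$ and $g$ actually attain their maxima.

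First I would pick $x^\ast \in X$ with $f(x^\ast) = m_f$; this exists because $X$ is compact and $f$ is continuous. The strong $\varepsilon$-pairing condition gives $|g(\phi(x^\ast)) - f(x^\ast)| \leq \varepsilon$, in particular $g(\phi(x^\ast)) \geq m_f - \varepsilon$. Since $m_g$ is the maximum of $g$, we have $g(\phi(x^\ast)) \leq m_g$, and therefore $m_f - \varepsilon \leq m_g$. Next I would run the symmetric argument: pick $y^\ast \in Y$ with $g(y^\ast) = m_g$, apply the bound $|f(\psi(y^\ast)) - g(y^\ast)| \leq \varepsilon$ to get $f(\psi(y^\ast)) \geq m_g - \varepsilon$, and use $f(\psi(y^\ast)) \leq m_f$ to conclude $m_g - \varepsilon \leq m_f$. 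Combining both inequalities yields $|m_f - m_g| \leq \varepsilon$.

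There is really no obstacle of any depth here; the argument is a one-line application of the definition. What is worth emphasizing, however, is that the proof truly requires the two-sided inequality in Definition \ref{D:pair}(ii), not just the one-sided bound in Definition \ref{D:pair}(i). With only an $\varepsilon$-pairing one obtains $g(\phi(x^\ast)) \leq f(x^\ast) + \varepsilon$ and $f(\psi(y^\ast)) \leq g(y^\ast) + \varepsilon$, which gives upper bounds but no lower bound on the image values, so the comparison between $m_f$ and $m_g$ cannot be closed. This is consistent with the example just given, where $\prer(\mm{X}{f},\mm{Y}{g}) = 0$ while $m_f \neq m_g$, and it is precisely this failure that motivates using $\var$ rather than $\prer$ when composing with the cone operator.
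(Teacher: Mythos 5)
Your proof is correct and follows essentially the same route as the paper: evaluate the strong pairing bounds at points where $f$ and $g$ attain their maxima and use $g(\phi(x^\ast)) \leq m_g$, $f(\psi(y^\ast)) \leq m_f$ to sandwich $|m_f - m_g|$. The paper organizes this as a case split on whether $m_g \leq m_f$ rather than deriving the two one-sided inequalities symmetrically, but the substance is identical, and your remark that the two-sided bound is genuinely needed is accurate.
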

 \begin{proof}
Let $(\phi, \psi)$ be such that $|g(\phi(x)) - f(x)| \leq \varepsilon$ and
$|f(\psi(y)) - g(y)| \leq \varepsilon$, for all $x \in X$ and $y \in Y$. Pick
$x_0 \in X$ and $y_0 \in Y$ that satisfy $m_f = f(x_0)$ and $m_g = g (y_0)$.
If $m_g \leq m_f$, then, $g (\phi(x_0)) \leq m_g \leq m_f = f(x_0)$, so that
 $|m_f - m_g| \leq |f(x_0) - g (\phi(x_0))| \leq \varepsilon$. The same argument
 applies if $m_f \leq m_g$, proving the lemma.
 \end{proof}
 
Given a map $\phi \colon X \to Y$, we refer to
$\phi_c \colon C_X \to C_Y$, defined by $\phi_c ([(x,t)]) = [(\phi(x), t)]$,
as the cone on $\phi$. We use the notation $\phi_c$, instead of $\phi^\ast$,
to distinguish this construction from the cone on a signal $f$
(see  Definition \ref{D:signal}).
 
\begin{lemma} \label{L:cone}
Let $\mm{X}{f}, \mm{Y}{g} \in \fgh$ and $\varepsilon >0$. 
 
\begin{itemize}
\item[(i)] If $(\phi, \psi) \in \Pi(X,Y)$ is a strong $\varepsilon$-pairing between
$\mm{X}{f}$ and $\mm{Y}{g}$, then $(\phi_c, \psi_c) \in \Pi(C_X, C_Y)$ is
a strong $\varepsilon$-pairing between $\cone (\mm{X}{f})$ and $\cone (\mm{Y}{g})$.
\item[(ii)]  If $(\phi, \psi)$ is a strong $\varepsilon$-matching between
$\mm{X}{f}$ and $\mm{Y}{g}$, then $(\phi_c, \psi_c)$ is
a strong $\varepsilon$-matching between $\cone (\mm{X}{f})$ and $\cone (\mm{Y}{g})$.
\end{itemize} \end{lemma}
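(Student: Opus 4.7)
The plan is to reduce both parts to direct computations that exploit how $f^\ast$ is defined as an affine interpolation between $f$ and the constant $m_f$. The only additional ingredient I will need is Lemma \ref{L:max}, which ensures that a strong $\varepsilon$-pairing automatically forces $|m_f - m_g| \leq \varepsilon$, so the ``new'' value $m_f$ introduced at the cone point is controlled by the same slack $\varepsilon$ as $f$ itself.

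For part (i), I will simply unpack the definitions at an arbitrary point $[(x,t)] \in C_X$. Writing
\[
g^\ast(\phi_c([(x,t)])) - f^\ast([(x,t)]) \;=\; (1-t)\bigl(g(\phi(x))-f(x)\bigr) + t\,(m_g - m_f),
\]
I recognize the right-hand side as a convex combination of two quantities, each of absolute value at most $\varepsilon$ by the strong pairing hypothesis and Lemma \ref{L:max} respectively. Hence the left-hand side is bounded by $\varepsilon$, and the symmetric inequality for $f^\ast \circ \psi_c$ versus $g^\ast$ follows identically.

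For part (ii), given a $2\varepsilon$-homotopy $H \colon X \times [0,1] \to X$ from $I_X$ to $\psi \circ \phi$ over $f$, I will cone it off by setting $H_c \colon C_X \times [0,1] \to C_X$, $H_c([(x,t)],s) = [(H(x,s), t)]$. I will first verify that $H_c$ is well-defined and continuous: at $t=1$ the formula yields $[(H(x,s),1)] = \ast_X$ independently of $x$, and continuity follows by descending the obviously continuous map $X \times [0,1] \times [0,1] \to C_X$, $(x,t,s) \mapsto [(H(x,s),t)]$, through the quotient, using that the product of the quotient map $X \times [0,1] \to C_X$ with the identity on the locally compact space $[0,1]$ is itself a quotient map. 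The endpoint conditions $H_c(\cdot,0) = I_{C_X}$ and $H_c(\cdot,1) = \psi_c \circ \phi_c$ then follow directly from $H(\cdot,0) = I_X$ and $H(\cdot,1) = \psi \circ \phi$. For the $2\varepsilon$-estimate I will again exploit the affine structure of $f^\ast$:
\[
f^\ast\bigl(H_c([(x,t)],s)\bigr) = (1-t)\,f(H(x,s)) + t\,m_f \leq (1-t)(f(x)+2\varepsilon) + t\,m_f = f^\ast([(x,t)]) + (1-t)\cdot 2\varepsilon,
\]
which is at most $f^\ast([(x,t)]) + 2\varepsilon$. The symmetric construction applied to the $2\varepsilon$-homotopy from $I_Y$ to $\phi \circ \psi$ over $g$ completes the strong $\varepsilon$-matching.

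The only mildly delicate step is the continuity of $H_c$ at the cone point (i.e.\ the quotient-map argument); everything else is a routine computation resting on the single structural fact that the affine combination $(1-t)(\cdot) + t\,m_f$ propagates $\varepsilon$-slack without inflation, so error bounds on $f$ transfer verbatim to $f^\ast$.
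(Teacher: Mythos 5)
Your proof is correct and follows essentially the same route as the paper's: part (i) decomposes the difference $g^\ast\circ\phi_c - f^\ast$ as an affine combination controlled by the strong pairing hypothesis and Lemma \ref{L:max}, and part (ii) cones off the homotopy via $H_c([(x,t)],s) = [(H(x,s),t)]$ and runs the identical $2(1-t)\varepsilon \leq 2\varepsilon$ estimate. The only addition is your explicit verification that $H_c$ is well-defined and continuous at the cone point, a detail the paper leaves implicit.
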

\begin{proof}
(i)  From the definitions of cones and Lemma \ref{L:max}, we have that
\begin{equation}
\begin{split}
|g^\ast (\phi_c ([(x,t)])) - f^\ast([(x,t)])| &\leq
(1-t) |(g \circ \phi) (x) - f(x)| + t |m_f - m_g|  \\
&\leq (1-t) \varepsilon + t \varepsilon  = \varepsilon \,.
\end{split}
\end{equation}
Similarly, $|f^\ast (\psi_c ([(x,t)])) - g^\ast([(x,t)])| \leq \varepsilon$,
proving the claim.

(ii) Let $(\phi, \psi)$ be a strong $\varepsilon$-matching between  $\mm{X}{f}$
and $\mm{Y}{g}$. By (i), $(\phi_c, \psi_c)$ is a strong $\varepsilon$-pairing, so it suffices
to verify the condition on homotopies. Let $H \colon X \times [0,1] \to X$ be a
$2 \varepsilon$-homotopy from $I_X$ to $\psi \circ \phi$ over $f$. We show that
$H_c \colon C_X \times [0,1] \to C_X$ defined by $H_c ([x,t],s) = [H(x,s),t]$ is
a $2\varepsilon$-homotopy from $I_{C_X}$ to $\psi_c \circ \phi_c$ over $f^\ast$.
Indeed,
\begin{equation}
\begin{split}
f^\ast(H_c ([x,t],s)) &= (1-t) f(H (x,s)) + t m_f \leq (1-t) (f(x) + 2 \varepsilon) + t m_f \\
&= f^\ast ([x,t]) + 2 (1-t) \varepsilon \leq f^\ast ([x,t]) + 2 \varepsilon \,.
\end{split}
\end{equation}
Similarly, we construct a $2\varepsilon$-homotopy from $I_{C_Y}$ to
$\phi_c \circ \psi_c$ over $g^\ast$. This concludes the proof.
 \end{proof}

\begin{proposition} \label{P:match}
The inequality
$\hat{r}_\infty (\mm{X}{f}, \mm{Y}{g}) \leq \var (\mm{X}{f}, \mm{Y}{g})$
holds for any $\mm{X}{f}, \mm{Y}{g} \in \fgh$.
\end{proposition}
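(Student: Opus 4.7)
The plan is to reduce the inequality to a direct consequence of Lemma \ref{L:cone}(ii). The two cases to handle are (a) when $\var(\mm{X}{f}, \mm{Y}{g})$ is finite, which means $X$ and $Y$ are homotopy equivalent, and (b) when it is infinite, in which case there is nothing to prove.

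For case (a), I would fix an arbitrary $\varepsilon > \var(\mm{X}{f}, \mm{Y}{g})$, so that by definition there exists a strong $\varepsilon$-matching $(\phi,\psi)$ between $\mm{X}{f}$ and $\mm{Y}{g}$. Applying Lemma \ref{L:cone}(ii), the cone pair $(\phi_c, \psi_c)$ is a strong $\varepsilon$-matching between $\cone(\mm{X}{f})$ and $\cone(\mm{Y}{g})$. Since $C_X$ and $C_Y$ are both contractible (hence homotopy equivalent), this shows that $\cone(\mm{X}{f}) \approx_\varepsilon \cone(\mm{Y}{g})$ and therefore $\var(\cone(\mm{X}{f}), \cone(\mm{Y}{g})) \leq \varepsilon$. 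Taking the infimum over all such $\varepsilon$ then yields $\hat{r}_\infty(\mm{X}{f}, \mm{Y}{g}) \leq \var(\mm{X}{f}, \mm{Y}{g})$, as required.

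There is essentially no obstacle here beyond invoking Lemma \ref{L:cone}(ii) correctly; the substantive content (that coning is non-expansive on pairings via the bound $|m_f - m_g| \leq \varepsilon$ from Lemma \ref{L:max}, and that the homotopies lift by the formula $H_c([x,t],s) = [H(x,s),t]$) is already packaged in that lemma. The only subtlety worth noting is that the argument genuinely requires the \emph{strong} matching hypothesis: an ordinary $\varepsilon$-pairing does not in general give a bound on $|m_f - m_g|$, which is why the analogous statement for $\prer$ versus $r_\infty$ fails, as the two-point example preceding Lemma \ref{L:max} shows.
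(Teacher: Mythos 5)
Your proof is correct and follows essentially the same route as the paper: both reduce the claim to Lemma \ref{L:cone}(ii) by taking an arbitrary $\varepsilon > \var(\mm{X}{f}, \mm{Y}{g})$, coning the resulting strong $\varepsilon$-matching, and passing to the infimum. Your added remark about why the \emph{strong} hypothesis is essential is accurate but not part of the argument itself.
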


\begin{proof}
The statement is trivial if $\var (\mm{X}{f}, \mm{Y}{g}) = \infty$, so we
assume that this distance is finite. Let $\varepsilon > \var (\mm{X}{f}, \mm{Y}{g})$.
Then, there is a strong $\varepsilon$-matching $(\phi,\psi)$ between $\mm{X}{f}$
and $\mm{X}{g}$. By Lemma \ref{L:cone} (ii), $(\phi_c, \psi_c)$ is a strong
$\varepsilon$-matching between $\cone (\mm{X}{f})$ and $\cone (\mm{Y}{g})$. Thus
$\hat{r}_\infty (\mm{X}{f}, \mm{Y}{g}) \leq \varepsilon$. Taking
infimum over $\varepsilon > \var (\mm{X}{f}, \mm{Y}{g})$, the claim follows.
\end{proof}

%--------------------------------------

\section{Topology of Functional Spaces} \label{S:homology}

We briefly recall the definitions of persistence modules over
$(\real, \leq)$ and interleaving distance between two persistence modules.
For more details, we refer the reader to \cite{chazaletal,lesnick11}. We regard $(\real, \leq)$ as
a category $\mathscr{R}$ whose objects are the points $a \in \real$ with a single morphism
$a \to b$ if $a \leq b$ and none otherwise. A persistence module $\mathbb{V}$
(over a fixed field $\mathbb{F}$) is a functor $\mathcal{F}$ from $\mathscr{R}$ to the
category of vector spaces over $\mathbb{F}$. We use the notation $V_a := \mathcal{F} (a)$ for the vector
space over $a \in \real$. For $a \leq b$, we write $\nu_a^b \colon V_a \to V_b$ for the
linear mapping associated with the morphism $a \to b$.

Let $\mathbb{V} = \{V_a, \nu_a^b\}$ and $\mathbb{W} = \{W_a, \omega_a^b\}$ 
be persistence modules and $\varepsilon > 0$. A morphism $\Phi \colon
\mathbb{V} \to \mathbb{W}$ of degree $\varepsilon$ is a collection of
linear mappings $\phi_a \colon V_a \to W_{a + \varepsilon}$, $a \in \real$,
satisfying $\omega_{a + \varepsilon}^{a + r +\varepsilon} \circ \phi_a
= \phi_{a+r} \circ \nu_a^{a + r}$, for any $a \in \real$ and $r \geq 0$. An
$\varepsilon$-interleaving between $\mathbb{V}$ and $\mathbb{W}$ is a pair
$\Phi \colon \mathbb{V} \to \mathbb{W}$ and $\Psi \colon \mathbb{W} \to \mathbb{V}$
of morphisms of degree $\varepsilon$ satisfying $\psi_{a+\varepsilon} \circ 
\phi_a = \nu_a^{a+2 \varepsilon}$ and $\phi_{a+\varepsilon} \circ 
\psi_a = \omega_a^{a+2 \varepsilon}$, $\forall a \in \real$. We write
$\mathbb{V} \sim_\varepsilon \mathbb{W}$ to indicate that there is an
$\varepsilon$-interleaving between the two persistence modules. The interleaving
distance $d_I$ is defined as:
\begin{itemize}
\item[(i)] $d_I (\mathbb{V}, \mathbb{W}) = \infty$, if no interleaving exists;
\item[(ii)] $d_I (\mathbb{V}, \mathbb{W}) = \inf \{\varepsilon > 0 \,| \,
\mathbb{V} \sim_\varepsilon \mathbb{W} \}$, otherwise.
\end{itemize}

A persistence module $\mathbb{V}$ is {\em tame} if $\nu_a^b$ has finite rank,
for any $a < b$. There is a well-defined persistence diagram (or barcode),
denoted $PD(\mathbb{V})$, associated with any tame $\mathbb{V}$ and the
Isomorphism Theorem for persistence modules states that
\begin{equation} \label{E:iso}
d_b (PD(\mathbb{V}), PD(\mathbb{W})) = d_I (\mathbb{V}, \mathbb{W}) \,,
\end{equation}
if $\mathbb{V}$ and $\mathbb{W}$ are tame, where $d_b$ denotes
bottleneck distance \cite{cohenetal07,desilvaetal}.

\subsection{Stability of Persistent Homology} \label{S:stability}

Let $\mm{X}{f} \in \fgh$ be an $ft$-space. For $a \in \real$, let
$X_a = f^{-1} (-\infty, a]$ be the corresponding sub-level set of $f$.
Clearly, $X_a \subseteq X_b$, if $a \leq b$, and $\cup_{a \in \real} X_a = X$.
Thus, the sub-level sets of $f$ induce a filtration of $X$ by closed subsets, which may be
viewed as a functor from $\mathcal{R}$ to the category of topological spaces and
continuous mappings. Composition with the $i$-dimensional
homology functor (with coefficients in $\mathbb{F}$) yields a
persistence module, which we denote by $\homol_i (\mm{X}{f})$. The
vector space over $a \in \real$ is $H_i (X_a)$ and the morphism
$\iota_a^b \colon H_i(X_a) \to H_i (X_b)$, $a \leq b$, is the homomorphism
on homology induced by inclusion.

A stability theorem for persistent homology of $ft$-spaces has been
proven in \cite{frosinietal17}, but we include a proof for the one-parameter persistence
case needed in this paper.

\begin{theorem}[Frosini et al.\,\cite{frosinietal17}] \label{T:stab}
Let $\mm{X}{f}$ and $\mm{Y}{g}$ be functional topological spaces. Then,
\[
d_I (\homol_i (\mm{X}{f}), \homol_i (\mm{Y}{g})) \leq
\prer (\mm{X}{f}, \mm{Y}{g}) \,.
\]
\end{theorem}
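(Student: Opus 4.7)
The plan is to turn an $\varepsilon$-matching between $\mm{X}{f}$ and $\mm{Y}{g}$ directly into an $\varepsilon$-interleaving between $\homol_i(\mm{X}{f})$ and $\homol_i(\mm{Y}{g})$, and then take infimum. If $\prer(\mm{X}{f}, \mm{Y}{g}) = \infty$ the bound is vacuous, so I would fix an arbitrary $\varepsilon > \prer(\mm{X}{f}, \mm{Y}{g})$ and produce an $\varepsilon$-matching $(\phi, \psi)$ between the two $ft$-spaces as guaranteed by the definition of $\prer$.

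The first observation is that the pairing condition $(g \circ \phi)(x) \leq f(x) + \varepsilon$ says precisely that $\phi$ sends $X_a$ into $Y_{a+\varepsilon}$ for every $a \in \real$: if $f(x) \leq a$ then $g(\phi(x)) \leq a + \varepsilon$. Hence $\phi$ restricts to continuous maps $\phi_a \colon X_a \to Y_{a+\varepsilon}$, and symmetrically $\psi$ restricts to $\psi_a \colon Y_a \to X_{a+\varepsilon}$. These restrictions are evidently compatible with the inclusions $X_a \hookrightarrow X_b$ and $Y_{a+\varepsilon} \hookrightarrow Y_{b+\varepsilon}$ for $a \leq b$ (both squares commute on the nose as set maps). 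Applying $H_i(-)$ then yields collections of linear maps $\Phi_a := (\phi_a)_\ast \colon H_i(X_a) \to H_i(Y_{a+\varepsilon})$ and $\Psi_a := (\psi_a)_\ast \colon H_i(Y_a) \to H_i(X_{a+\varepsilon})$ that commute with the structure maps of $\homol_i(\mm{X}{f})$ and $\homol_i(\mm{Y}{g})$; that is, $\Phi$ and $\Psi$ are morphisms of persistence modules of degree $\varepsilon$.

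The crux is to verify the two interleaving identities
\[
\Psi_{a+\varepsilon} \circ \Phi_a = \iota_a^{a+2\varepsilon} \quad \text{and} \quad \Phi_{a+\varepsilon} \circ \Psi_a = \iota_a^{a+2\varepsilon}
\]
for each $a$, where $\iota$ denotes the inclusion-induced map on $H_i$. Here the matching (rather than merely pairing) hypothesis finally pays off: the $2\varepsilon$-homotopy $H \colon X \times [0,1] \to X$ from $I_X$ to $\psi \circ \phi$ over $f$ satisfies $f(H(x,s)) \leq f(x) + 2\varepsilon$, so for $x \in X_a$ one has $H(x,s) \in X_{a+2\varepsilon}$ for all $s$. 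Consequently $H$ restricts to a homotopy inside $X_{a+2\varepsilon}$ between the inclusion $X_a \hookrightarrow X_{a+2\varepsilon}$ and the composite $\psi \circ \phi \colon X_a \to X_{a+2\varepsilon}$. Since homotopic maps induce the same map on singular homology, applying $H_i$ gives exactly the first identity. The second identity follows from the symmetric $2\varepsilon$-homotopy from $I_Y$ to $\phi \circ \psi$ over $g$.

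This produces an $\varepsilon$-interleaving between $\homol_i(\mm{X}{f})$ and $\homol_i(\mm{Y}{g})$, so $d_I(\homol_i(\mm{X}{f}), \homol_i(\mm{Y}{g})) \leq \varepsilon$; taking infimum over $\varepsilon > \prer(\mm{X}{f}, \mm{Y}{g})$ gives the theorem. The only step that carries nontrivial content is the third one, translating a homotopy controlled at the functional level into a homotopy internal to a sublevel set; the rest is essentially bookkeeping about how continuous maps restrict to sublevel sets under the pairing inequalities. I do not expect technical obstacles, but I would be careful to note that no homotopy-equivalence hypothesis on $X$ and $Y$ is needed for the inequality itself, since the right-hand side is $\infty$ exactly when no $\varepsilon$-matching exists.
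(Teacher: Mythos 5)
Your proposal is correct and follows essentially the same route as the paper's proof: restrict the $\varepsilon$-pairing to sublevel sets to get degree-$\varepsilon$ morphisms, use the $2\varepsilon$-homotopies over $f$ and $g$ to show the composites are homotopic to inclusions inside $X_{a+2\varepsilon}$ and $Y_{a+2\varepsilon}$, and take the infimum. Your added detail on why the controlled homotopy stays within the larger sublevel set is exactly the point the paper's proof leaves implicit.
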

\begin{proof}
It suffices to consider the case $\prer (\mm{X}{f}, \mm{Y}{g}) < \infty$.
Let $(\phi, \psi) \in \Pi (X,Y)$ be an $\varepsilon$-matching between 
$\mm{X}{f}$ and $\mm{Y}{g}$. For any $a \in \real$, condition (i) in
Definition \ref{D:pair} ensures that $\phi (X_a) \subseteq Y_{a + \varepsilon}$.
Thus, $\phi$ induces a mapping $\phi_a \colon X_a \to Y_{a + \varepsilon}$.
Similarly, $\psi$ induces mappings $\psi_a \colon Y_a \to X_{a + \varepsilon}$.
Condition (a) in Definition \ref{D:match}(i) implies that $\psi_{a + \varepsilon}
\circ \phi_a \colon X_a \to X_{a + 2 \varepsilon}$ is homotopic to the inclusion
map $X_a \hookrightarrow X_{a + 2 \varepsilon}$. Analogously,
$\phi_{a + \varepsilon} \circ \psi_a \colon Y_a \to Y_{a + 2 \varepsilon}$ is
homotopic to the inclusion map $Y_a \hookrightarrow Y_{a + 2 \varepsilon}$.
Thus, the 
homomorphisms on homology induced by $\phi_a$ and $\psi_a$, $a \in \real$,
yield an $\varepsilon$-interleaving between $\homol_i (\mm{X}{f})$ and 
$\homol_i (\mm{Y}{g})$. This implies that 
$d_I (\homol_i (\mm{X}{f}), \homol_i (\mm{Y}{g})) \leq \varepsilon$. Taking
infimum over $\varepsilon > \prer (\mm{X}{f}, \mm{Y}{g})$, the result follows.
\end{proof}

\begin{corollary} \label{C:stab}
Let $\mm{X}{f}$ and $\mm{Y}{g}$ be functional topological spaces. Then,
\[
d_I (\homol^\ast_i (\mm{X}{f}), \homol^\ast_i (\mm{Y}{g})) \leq
r_\infty (\mm{X}{f}, \mm{Y}{g}) \leq  \hat{r}_\infty (\mm{X}{f}, \mm{Y}{g})
\leq \var (\mm{X}{f}, \mm{Y}{g}) \,,
\]
where $\homol^\ast_i (\mm{X}{f}) = \homol_i (\cone (\mm{X}{f}))$ and
$\homol^\ast_i (\mm{Y}{g}) = \homol_i (\cone (\mm{Y}{g}))$.
\end{corollary}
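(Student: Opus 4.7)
The proof proposal is to chain together three easy inequalities, each of which follows from a result or definition already established.

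For the leftmost inequality, the plan is simply to apply the stability Theorem \ref{T:stab} to the coned $ft$-spaces $\cone(\mm{X}{f})$ and $\cone(\mm{Y}{g})$. By definition of $\homol^\ast_i$ as the persistent homology of the cone, this yields
\[
d_I(\homol^\ast_i(\mm{X}{f}), \homol^\ast_i(\mm{Y}{g})) = d_I(\homol_i(\cone(\mm{X}{f})), \homol_i(\cone(\mm{Y}{g}))) \leq \prer(\cone(\mm{X}{f}), \cone(\mm{Y}{g})),
\]
and the right-hand side is precisely $r_\infty(\mm{X}{f}, \mm{Y}{g})$ by equation \eqref{E:rinf}.

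For the middle inequality $r_\infty \leq \hat{r}_\infty$, I would argue directly from the definitions: since every strong $\varepsilon$-pairing is an $\varepsilon$-pairing (as noted in Definition \ref{D:pair}) and the homotopy conditions in Definition \ref{D:match} are identical, any strong $\varepsilon$-matching between $\cone(\mm{X}{f})$ and $\cone(\mm{Y}{g})$ is also an $\varepsilon$-matching between them. Hence the infimum defining $\prer(\cone(\mm{X}{f}), \cone(\mm{Y}{g}))$ is taken over a set containing that defining $\var(\cone(\mm{X}{f}), \cone(\mm{Y}{g}))$, giving the inequality.

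Finally, the rightmost inequality is precisely the statement of Proposition \ref{P:match}, so no additional work is needed there. There is no serious obstacle in this argument; the substantive content was already done in Lemmas \ref{L:max} and \ref{L:cone}, which feed Proposition \ref{P:match}, and in Theorem \ref{T:stab}. The only mildly delicate point is to check that the definition $\homol^\ast_i(\mm{X}{f}) = \homol_i(\cone(\mm{X}{f}))$ together with the observation that cones are compact topological spaces (so that Theorem \ref{T:stab} applies to them) justifies the first step without circular dependence.
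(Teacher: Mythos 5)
Your proof is correct and follows essentially the same route as the paper: Theorem \ref{T:stab} applied to the cones together with \eqref{E:rinf} gives the first inequality, the inclusion of strong $\varepsilon$-matchings among $\varepsilon$-matchings gives the second, and Proposition \ref{P:match} gives the third. The paper's own proof is just a terser version of exactly this chain.
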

\begin{proof}
The first inequality follows from Theorem \ref{T:stab} and \eqref{E:rinf}. The second inequality 
is a direct consequence of the definition of the metrics $r_\infty$ and $\hat{r}_\infty$.
The third inequality was proven in Proposition \ref{P:match}.
\end{proof}

If $(X, \tau_X)$ is a triangulable space, then $\homol^\ast_i (\mm{X}{f})$
is tame (cf.\,\cite{cohenetal07,desilvaetal}) so that there is a well-defined persistence diagram
$PD_i^\ast (\mm{X}{\alpha})$ associated with $\homol^\ast_i (\mm{X}{f})$.

\begin{corollary} \label{C:diagram}
Let $\mm{X}{f}$ and $\mm{Y}{g}$ be triangulable ft-spaces and $i \geq 0$
be an integer. Then,
\[
d_B (PD_i^\ast (\mm{X}{f}), PD_i^\ast (\mm{Y}{g}) \leq
r_\infty (\mm{X}{f}, \mm{Y}{g}) \leq  \hat{r}_\infty (\mm{X}{f}, \mm{Y}{g})
\leq \var (\mm{X}{f}, \mm{Y}{g}) \,.
\]
\end{corollary}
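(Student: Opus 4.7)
The plan is to assemble the claim from two pieces already at hand: Corollary \ref{C:stab} and the Isomorphism Theorem \eqref{E:iso} for persistence modules. The last two inequalities
$r_\infty(\mm{X}{f}, \mm{Y}{g}) \leq \hat{r}_\infty(\mm{X}{f}, \mm{Y}{g}) \leq \var(\mm{X}{f}, \mm{Y}{g})$
are literally the content of Corollary \ref{C:stab} and may be quoted verbatim, so the only new content is the first inequality, which replaces the interleaving distance between the coned persistence modules by the bottleneck distance between their diagrams.

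For this first inequality, I would first note that the persistence diagrams $PD_i^\ast(\mm{X}{f})$ and $PD_i^\ast(\mm{Y}{g})$ are in fact defined: if $X$ is triangulable, then so is the cone $C_X$ (a triangulation of $X$ extends canonically to a simplicial structure on $C_X$ by coning each simplex to $\ast_X$), and the remark immediately preceding the corollary then yields tameness of $\homol_i^\ast(\mm{X}{f}) = \homol_i(\cone(\mm{X}{f}))$, and similarly for $\mm{Y}{g}$. With tameness in hand, the Isomorphism Theorem \eqref{E:iso} gives
\[
d_B(PD_i^\ast(\mm{X}{f}), PD_i^\ast(\mm{Y}{g})) = d_I(\homol_i^\ast(\mm{X}{f}), \homol_i^\ast(\mm{Y}{g})),
\]
and the first inequality of Corollary \ref{C:stab} bounds the right-hand side by $r_\infty(\mm{X}{f}, \mm{Y}{g})$. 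Chaining these three facts produces the desired bottleneck estimate.

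There is no real obstacle: the work has effectively been done in Corollary \ref{C:stab} and in the tameness remark. The only place where a small verification is required is the passage from triangulability of $X$ to triangulability (hence tameness) of $\cone(\mm{X}{f})$, which is routine because $f^\ast$ is piecewise linear along cone lines between the values of $f$ at vertices and the constant $m_f$ at the cone point, so that its sublevel sets are subcomplexes of a suitable subdivision. Consequently the proof is a two-line consequence of Corollary \ref{C:stab} and \eqref{E:iso}, with the tameness observation cited from the paragraph preceding the statement.
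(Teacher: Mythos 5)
Your proposal is correct and follows exactly the paper's route: the last two inequalities are quoted from Corollary \ref{C:stab}, and the first follows by combining the interleaving bound of that corollary with the Isomorphism Theorem \eqref{E:iso}, using the tameness remark preceding the statement. The extra verification that triangulability of $X$ passes to the cone (hence tameness of $\homol_i^\ast(\mm{X}{f})$) is a welcome explicit check of something the paper leaves implicit, but it does not change the argument.
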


\begin{proof}
This follows from Corollary \ref{C:stab} and \eqref{E:iso}.
\end{proof}

%--------------------------------------
\subsection{Effect of Coning on Persistent Homology}

As explained in the Introduction, one of the practical motivations for coning functional data 
is the truncation effect it has on persistent homology of
their sublevel set filtrations. Here, we show more formally that this is indeed the
effect of coning an $ft$-space.

\begin{definition}
Let $\mathbb{V} = \{V_a, \nu_a^b\}$ be a persistence module and $r \in \real$.
The $r$-truncation of $\mathbb{V}$ is defined as the persistence module
$T_r (\mathbb{V}) := \{\overline{V}_a, \overline{\nu}_a^b\}$, where
\[
\overline{V}_a =
\begin{cases}
V_a & \text{if $a<r$;} \\
0 & \text{if $a \geq r$;} \\
\end{cases}
\qquad \text{and} \qquad
\overline{\nu}_a^b=
\begin{cases}
\nu_a^b & \text{if $b<r$;} \\
0 & \text{if $b \geq r$.} \\
\end{cases}
\]
\end{definition}

Let $\mm{X}{f}$ be an $ft$-space and $m = \min f$. We denote by $\mm{P}{g}$ the
$ft$-space where $P = \{p\}$ is a  one-point space and $g \colon P \to \real$ is
given by $g(p) = m$. By the definition of $g$, the (constant) maps $\phi_1 \colon X \to P$
and $\phi_2 \colon C_X \to P$ have the property that the sublevel sets  of $f$, $f^\ast$ and $g$
satisfy $\phi_1 (X_a) \subseteq P_a$ and $\phi_2 ((C_X)_a) \subseteq P_a$, $\forall a \in \real$. 
It is simple to verify that, for each $i \geq 0$, $\phi_1$ and $\phi_2$ induce homomorphisms
$\phi_{1\ast} \colon \homol_i (\mm{X}{f}) \to \homol_i (\mm{P}{g})$ and 
$\phi_{2\ast} \colon \homol^\ast_i (\mm{X}{f}) \to \homol_i (\mm{P}{g})$ of persistence
modules. Note that $\homol_0  (\mm{P}{g})$ is isomorphic to the interval
module $\mathbb{I}_m^\infty$ associated with the interval $[m, \infty)$,
and $\homol_i  (\mm{P}{g}) = 0$ for $i >0$. The
kernels of $\phi_{1\ast}$ and $\phi_{2\ast}$ give persistence submodules
$\widetilde{\homol}_i (\mm{X}{f}) := \ker (\phi_{1\ast})$ and 
$\widetilde{\homol}^\ast_i (\mm{X}{f}) := \ker (\phi_{2\ast})$ of $\homol_i (\mm{X}{f})$
and $\homol^\ast_i (\mm{X}{f})$, respectively, which yield direct sum decompositions
\begin{equation} \label{E:decomposition}
\homol_i (\mm{X}{f}) \cong \widetilde{\homol}_i (\mm{X}{f}) \oplus  \homol_i (\mm{P}{g})
\quad \text{and} \quad
\homol^\ast_i (\mm{X}{f}) \cong \widetilde{\homol}^\ast_i (\mm{X}{f}) \oplus
 \homol_i (\mm{P}{g}) \,.
\end{equation}
(These decompositions are non-trivial only for the case when $i=0$ since $\homol_i (\mm{P}{g}) = 0$
for $i>0$.)
One may construct such isomorphisms by splitting the homomorphisms $\phi_{1\ast}$
and $\phi_{2\ast}$, as follows. Pick $x_0 \in X$ such that $f(x_0) = m$ and
let $\psi_1 \colon P \to X$ and $\psi_2 \colon P \to C_X$ be given by $\psi_1 (p) = x_0$
and $\psi_2 (p) = [(x_0,0)]$. Then, the induced homomorphisms $\psi_{1\ast}$ and
$\psi_{2\ast}$ on persistence modules are well defined and split $\phi_{1\ast}$ and
$\phi_{2\ast}$, as desired. As usual, the splitting is not natural.

\begin{proposition} \label{P:truncate}
Let $i \geq 0$ be an integer, $\mm{X}{f} \in \fgh$, and $m_f = \max f$. Then,
$\widetilde{\homol}^\ast_i (\mm{X}{f}) = \widetilde{\homol}_i (\cone (\mm{X}{f}))$ is
isomorphic to $T_{m_f} (\widetilde{\homol}_i (\mm{X}{f}))$.
\end{proposition}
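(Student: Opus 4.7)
The plan is to establish the two asserted identifications separately. The equality $\widetilde{\homol}^\ast_i(\mm{X}{f}) = \widetilde{\homol}_i(\cone(\mm{X}{f}))$ is essentially bookkeeping: since $\min f^\ast = \min f =: m$, the one-point $ft$-space $\mm{P}{g}$ and the constant map $\phi_2 \colon C_X \to P$ used in the definition of $\widetilde{\homol}^\ast_i(\mm{X}{f})$ are the same $\mm{P}{g}$ and constant map that appear when the definition of $\widetilde{\homol}_i(\,\cdot\,)$ is applied directly to $\cone(\mm{X}{f})$. Thus both sides are $\ker(\phi_{2\ast})$ as sub-persistence modules of $\homol_i(\cone(\mm{X}{f})) = \homol^\ast_i(\mm{X}{f})$.

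For the isomorphism with $T_{m_f}(\widetilde{\homol}_i(\mm{X}{f}))$, I would build the comparison level by level, using the geometric input already recorded after Definition~\ref{D:signal}. For $a < m_f$, the cone-line strong deformation retraction of $(C_X)_a$ onto $X_a$ yields an inclusion-induced isomorphism $j_a \colon H_i(X_a) \xrightarrow{\cong} H_i((C_X)_a)$, and the naturality of these retractions under the inclusions $(C_X)_a \hookrightarrow (C_X)_b$ for $a \leq b < m_f$ shows that the $j_a$ intertwine the structure maps of $\homol_i(\mm{X}{f})$ with those of $\homol_i(\cone(\mm{X}{f}))$ on that range. For $a \geq m_f$, $(C_X)_a = C_X$ is contractible, so $H_i((C_X)_a) = 0$ when $i > 0$ and $\mathbb{F}$ when $i = 0$; any structure map with $a < m_f \leq b$ factors through $H_i(C_X)$, hence vanishes in positive degrees and kills the reduced part in degree zero. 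Assembling these observations produces a direct-sum decomposition of persistence modules
\begin{equation*}
\homol_i(\cone(\mm{X}{f})) \;\cong\; T_{m_f}(\homol_i(\mm{X}{f})) \,\oplus\, \homol_i(\mm{P}{g}),
\end{equation*}
in which the right-hand summand is exactly the image of the splitting $\psi_{2\ast}$.

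Passing to $\ker(\phi_{2\ast})$ then removes the $\homol_i(\mm{P}{g})$ summand and leaves $T_{m_f}(\widetilde{\homol}_i(\mm{X}{f}))$, because truncation commutes with taking kernels (both operations being level-wise) and because, under the identifications $j_a$, the restriction of $\phi_{2\ast}$ to the first summand coincides with $\phi_{1\ast}$ on $\homol_i(\mm{X}{f})$. Positive degrees are immediate since $\homol_i(\mm{P}{g}) = 0$ and so $\widetilde{\homol}_i = \homol_i$ there. The main (though mild) obstacle is the degree-zero case: one must verify that at every level $a \in [m, m_f)$ the $\mathbb{F}$-summand of $H_0((C_X)_a) \cong H_0(X_a)$ carried by the cone point matches the summand carried by the class of $x_0$ in $H_0(X_a)$, so that the splittings $\psi_{1\ast}$ and $\psi_{2\ast}$ are compatible under $j_a$. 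This follows because both splittings are induced by the same basepoint $x_0 \in X_m \subseteq (C_X)_m$.
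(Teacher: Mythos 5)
Your argument follows essentially the same route as the paper's proof: the inclusion $\iota \colon X \hookrightarrow C_X$, $x \mapsto [(x,0)]$, induces level-wise isomorphisms $H_i(X_a) \to H_i((C_X)_a)$ for $a < m_f$ via the cone-line retraction, the reduced homology of $(C_X)_a = C_X$ vanishes for $a \geq m_f$, and compatibility with $\phi_{1\ast}$ and $\phi_{2\ast}$ lets one restrict to the kernels. One correction: your displayed decomposition $\homol_i(\cone(\mm{X}{f})) \cong T_{m_f}(\homol_i(\mm{X}{f})) \oplus \homol_i(\mm{P}{g})$ is false as written in degree zero --- at any level $a \in [\min f, m_f)$ the right-hand side is $H_0(X_a) \oplus \mathbb{F}$ while the left-hand side is only $H_0(X_a)$ --- so the first summand must be $T_{m_f}(\widetilde{\homol}_i(\mm{X}{f}))$, which is what your subsequent sentence implicitly uses. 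A second small imprecision: for $a < m_f$ the cone point does not lie in $(C_X)_a$ at all (since $f^\ast(\ast_X) = m_f$), so there is no ``summand carried by the cone point'' at those levels; the point matching the two splittings is $[(x_0,0)] = \iota(x_0) = \psi_2(p)$, exactly as your final sentence states.
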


\begin{proof}

Let $\iota \colon X \hookrightarrow C_X$ be the inclusion $x \mapsto [(x,0)]$. Then,
the sublevel sets of $f$ and $f^\ast$ satisfy $\iota (X_a) \subseteq (C_X)_a$, $\forall a \in \real$,
and $\iota$ induces a homomorphism $\iota_\ast \colon \homol_i (\mm{X}{f}) \to 
\homol_i (\cone(\mm{X}{f}))$.
The commutativity of the diagram
\begin{equation}
\xymatrix{
X_a \ar[drrr]^{\phi_1} \ar[dd]_{\iota} & & & \\
& & & P = \{p\} \\
(C_X)_a \ar[urrr]_{\phi_2} & & &
}
\end{equation}
for any $a \in \real$, implies that $\iota_\ast (\widetilde{\homol}_i (\mm{X}{f}))
\subseteq \widetilde{\homol}_i (\cone (\mm{X}{f}))$. Since $\iota$ induces a homotopy
equivalence between $X_a$ and $(C_X)_a$, for $a < m_f$, $\iota_\ast$ induces an isomorphism
$\iota_\ast \colon \widetilde{H}_i (X_a) \to \widetilde{H}_i ((C_X)_a)$, for $a < m_f$. Combining this 
with the fact that the reduced homology of $(C_X)_a$ is trivial for $a \geq m_f$, we
conclude that $\iota_\ast$ induces an isomorphism between 
$\widetilde{\homol}_i (\cone (\mm{X}{f}))$ and $T_{m_f} (\widetilde{\homol}_i (\mm{X}{f}))$,
as claimed.
\end{proof}

\begin{corollary}
Suppose that $\mm{X}{f} \in \fgh$ with $(X, \tau_X)$ triangulable and let 
$m = \min f$ and $m_f = \max f$.
For any integer $i \geq 0$, let $I_\lambda \subset \real$, $\lambda \in \Lambda_i$, be intervals
such that the corresponding interval modules $\mathbb{I}_\lambda$ yield a decomposition
$\widetilde{\homol}_i (\mm{X}{f}) = \bigoplus_{\lambda \in \Lambda_i}
\mathbb{I}_\lambda$. Then,
\begin{itemize}
\item[(i)] $\homol_0^\ast (\mm{X}{f}) \cong \mathbb{I}_m^\infty \oplus \bigoplus_{\lambda \in \Lambda_0}
\mathbb{I}_\lambda^{m_f}$;
\item[(ii)] $\homol_i^\ast (\mm{X}{f}) \cong \bigoplus_{\lambda \in \Lambda_i}
\mathbb{I}_\lambda^{m_f}$, for $i > 0$.
\end{itemize}
Here, $\mathbb{I}_\lambda^{m_f}$ and $\mathbb{I}_m^\infty$ are the interval modules associated
with $I^{m_f}_\lambda = I_\lambda \cap (-\infty, m_f)$ and $I_m^\infty = [m, \infty)$,
respectively. We make the convention that $\mathbb{I}_\lambda^{m_f}$ is trivial
if $I^{m_f}_\lambda = \emptyset$.
\end{corollary}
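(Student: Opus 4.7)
The plan is to assemble three ingredients already in hand: the splitting \eqref{E:decomposition}, Proposition \ref{P:truncate}, and the given interval decomposition of $\widetilde{\homol}_i (\mm{X}{f})$. Triangulability of $(X,\tau_X)$ ensures that $\homol_i(\mm{X}{f})$ is tame (cf.\,\cite{cohenetal07,desilvaetal}), hence so is the submodule $\widetilde{\homol}_i(\mm{X}{f})$, so the decomposition into interval modules $\mathbb{I}_\lambda$ asserted in the hypothesis is meaningful. The same tameness applies to $\homol^\ast_i(\mm{X}{f})$.

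First I would rewrite the left-hand side using \eqref{E:decomposition}, namely
\[
\homol^\ast_i (\mm{X}{f}) \cong \widetilde{\homol}^\ast_i (\mm{X}{f}) \oplus \homol_i (\mm{P}{g}) \,.
\]
The second summand contributes $\mathbb{I}_m^\infty$ when $i = 0$ and is trivial when $i > 0$, which already accounts for the interval $\mathbb{I}_m^\infty$ appearing in (i) and its absence in (ii). Next, I would invoke Proposition \ref{P:truncate} to replace the first summand by $T_{m_f}(\widetilde{\homol}_i(\mm{X}{f}))$.

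The remaining step is to verify that $T_{m_f}$ distributes over the given interval decomposition and identifies correctly on interval modules. Since $T_r$ acts by $V_a \mapsto V_a$ for $a < r$, $V_a \mapsto 0$ for $a \geq r$, and accordingly on structure maps, it preserves biproducts, giving
\[
T_{m_f}\!\left(\bigoplus_{\lambda \in \Lambda_i} \mathbb{I}_\lambda\right) \;\cong\; \bigoplus_{\lambda \in \Lambda_i} T_{m_f}(\mathbb{I}_\lambda).
\]
A direct inspection shows that for any interval $I_\lambda \subseteq \real$, the module $T_{m_f}(\mathbb{I}_\lambda)$ is exactly the interval module $\mathbb{I}_\lambda^{m_f}$ attached to $I_\lambda \cap (-\infty, m_f)$, and is zero when this intersection is empty, matching the stated convention.

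Combining the three observations yields the stated isomorphisms in (i) and (ii). There is no real obstacle; the whole proof is a bookkeeping exercise chaining Proposition \ref{P:truncate} with the splitting \eqref{E:decomposition}. The only point worth emphasizing is that one must verify that the identification in Proposition \ref{P:truncate} is as persistence modules (not merely degree-wise), so that it genuinely commutes with interval decompositions — but this is immediate from the construction of $\iota_\ast$, which is natural in $a$.
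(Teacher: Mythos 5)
Your proposal is correct and follows exactly the route of the paper's (one-line) proof: split $\homol^\ast_i(\mm{X}{f})$ via \eqref{E:decomposition}, identify $\widetilde{\homol}^\ast_i(\mm{X}{f})$ with $T_{m_f}(\widetilde{\homol}_i(\mm{X}{f}))$ using Proposition \ref{P:truncate}, and observe that truncation commutes with direct sums and sends $\mathbb{I}_\lambda$ to $\mathbb{I}_\lambda^{m_f}$. The paper states this as an ``immediate consequence''; you have merely supplied the bookkeeping it leaves implicit.
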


\begin{proof}
This is an immediate consequence of \eqref{E:decomposition} and Proposition \ref{P:truncate}.
\end{proof}
\vspace{0.2in}

% \hrule

% \vspace{0.2in}

% We now prove that the cone operator acts on persistent barcodes in the manner we described in the introduction. 

% First we analyze the effect of the cone operator on the merge tree \cite{morozov} associated to $\mathbb{X}_f =(X,\tau_X,f)$. The first remark is that since we do not assume that $X$ is connected, then the merge tree of $f$ may actually be a forest. Next, it is clear that the merge tree of $C\mathbb{X}_f$ will consist of a join all the trees in the forest corresponding to $\mathbb{X}_f$ exactly at level $m_f$. This will directly imply relationship between $PD_0(\mathbb{X}_f)$ and   $PD_0^\ast(\mathbb{X}_f)$. The following proposition provides a general relationship between the persistent diagrams of $\mathbb{X}_f$ and those of $C\mathbb{X}_f$.
 
%  \begin{proposition}
% Let $\mathbb{X}_f$ be any ft-space.  Then $\mathbb{H}_i^\ast(\mathbb{X}_f)$ can be obtained from 
% $\mathbb{H}_i(\mathbb{X}_f)$ as follows: 
% \begin{itemize}
% \item[$i=0$:] $a$
% \item[$i\geq 1$:] $a$
% \end{itemize}
% \facundo{continuar}
% \end{proposition}

%----------------

\section{Metric Measure Spaces} \label{S:mm}

A metric measure space ($mm$-space)  is a triple $\mm{X}{\alpha} =
(X, d_X, \alpha)$, where $\mathbb{X} = (X,d_X)$ is a metric space and
$\alpha$ is a Borel probability measure on $X$. We assume that
all $mm$-spaces are compact. Two $mm$-spaces
$\mm{X}{\alpha} = (X, d_X, \alpha)$ and
$\mm{Y}{\beta} = (Y, d_Y, \beta)$ are isomorphic if there is
an isometry $h \colon X \to Y$ such that $h_\ast (\alpha) =
\beta$. We denote by $\gw$ the collection of isomorphism
classes of compact $mm$-spaces, but often refer to $\mm{X}{\alpha}$
as an element of $\gw$. We begin
by equipping $\gw$ with a pseudo-metric that let us address
stability of persistence diagrams that seek to emphasize the shape
of regions rich in probability mass, downplaying the
geometry of regions that are not central to the distribution, as
characterized by larger values of the centrality function.

\subsection{A Metric on $\boldsymbol{\gw}$}

To motivate our definition of a (pseudo) metric on $\gw$ that is well suited to
the study of stability of the topology of centrality functions, we briefly digress
on well known metrics of related nature. Metrics such as the Gromov-Hausdorff distance
$d_{GH}$ between compact metric spaces and the Gromov-Wasserstein distance
$d_{GW,p}$ beween metric measure spaces may be defined via measures of distortion
of correspondences between the relevant spaces. ``Hard'' correspondences
given by a relation between two metric domains may be used to define
$d_{GH}$, whereas ``soft'' correspondences given by couplings of 
probability measures may be used for $d_{GW,p}$. More precisely, let
$\mathbb{X} = (X, d_X)$ and $\mathbb{Y} = (Y, d_Y)$ be compact
metric spaces. A {\em correspondence} $R$ between $X$ and $Y$ is
a subset $R \subset X \times Y$ such that $\pi_X (R) = X$ and
$\pi_Y (R) = Y$. The distortion of $R$ is given by
\begin{equation}
D (R) := \sup_{\tiny \begin{matrix}(x,y) \in R \\ (x',y') \in R \end{matrix}}
|d_X(x,x') - d_Y (y,y')| \,,
\end{equation}
and the Gromov-Hausdorff distance may be defined as
\begin{equation}
d_{GH} (\mathbb{X}, \mathbb{Y}) := \frac{1}{2}
\inf_{\tiny R} D (R)\,,
\end{equation}
where the infimum is taken over all correspondences $R$ between
$X$ and $Y$ \cite{gromov}. 
Similarly, a {\em coupling} between $\mm{X}{\alpha}$ and
$\mm{Y}{\beta}$ is a Borel probability measure $\mu$ on $X \times Y$
that marginalizes to $\alpha$ and $\beta$, respectively. In other words,
$(\pi_X)_\ast (\mu) = \alpha$ and $(\pi_Y)_\ast (\mu) = \beta$.
The collection of all such couplings is denoted
$\Gamma (\mm{X}{\alpha}, \mm{Y}{\beta})$.
For $p \geq 1$, the $p$-distortion of a coupling $\mu$ is defined as
\begin{equation}
D_p (\mu) := \left( \int \int
|d_X (x,x') - d_Y (y,y')|^p \, d\mu(x,y) d\mu (x', y') \right)^{1/p}
\end{equation}
and
\begin{equation}
d_{GW,p} (\mm{X}{\alpha}, \mm{Y}{\beta}) := \inf_\mu D_p (\mu),
\end{equation}
with the infimum taken over all
$\mu \in \Gamma (\mm{X}{\alpha}, \mm{Y}{\beta})$ (cf.\,\cite{memoli2011}).
In both definitions, a single correspondence or coupling is used for
both $(x,y)$ and $(x',y')$. We employ a hybrid version that combines a hard
correspondence for $(x,y)$ and a soft correspondence for $(x', y')$, except that
the hard correspondence  will be given by a pair of continuous mappings
between metric spaces rather than a relation. As a first step toward this goal,
for each  coupling $\mu \in \Gamma (\mm{X}{\alpha}, \mm{Y}{\beta})$,
we define a (pseudo) metric $d_{p, \mu}$ on the disjoint union $X \sqcup Y$,
as follows.

On the $X$ component, we define a pseudo-metric $d_{p, \alpha}$ by mapping
$X$ to the function space $\lp (\mm{X}{\alpha})$ and taking the induced metric,
as follows.
Let $K \colon (X, d_X) \to \lp (\mm{X}{\alpha})$, $x \mapsto K_x$,
be given by  $K_x (x') = d_X (x, x')$. Then, $d_{p, \alpha}$ is the (pseudo) metric
on $X$ induced by the norm $\| \cdot \|_{p, \alpha}$ under this
map. In other words,
\begin{equation}
\begin{split}
d_{p, \alpha} (x_1, x_2) = 
\left( \int_X |d_X (x_1,x') - d_X (x_2, x')|^p \, d \alpha (x') \right)^{1/p} \,.
\end{split}
\end{equation}
Note that, since $\mu$ has $\alpha$ as marginal, we have
\begin{equation} \label{E:marginal}
\begin{split}
d_{p, \alpha} (x_1, x_2) &= 
\left( \int_{X\times Y} |d_X (x_1,x') - d_X (x_2, x')|^p \, d \mu (x',y') \right)^{1/p} \\
&= \|K_{x_1} - K_{x_2}\|_{p, \mu} \,.
\end{split}
\end{equation}
Similarly, define $d_{p, \beta}$ on the $Y$ component. Using the coupling,
we define $d_{p, \mu}$ on the disjoint union $X \sqcup Y$ that restricts to
$d_{p,\alpha}$ and $d_{p,\beta}$ on $X$ and $Y$, respectively. The distance
between $x \in X$ and $y \in Y$ is given by
\begin{equation} \label{E:distmu}
\begin{split}
d_{p,\mu} (x,y) &=  \left( \int_{X \times Y}
|d_X(x, x') - d_Y (y, y')|^p \, d\mu (x',y') \right)^{1/p} \\
&= \|K_{x} - K_{y}\|_{p, \mu}
\end{split}
\end{equation}

\begin{proposition}
For any $p \geq 1$ and $\mu \in \Gamma (\mm{X}{\alpha},\mm{Y}{\beta})$,
the distance $d_{p,\mu}$ defines a pseudo-metric on $X \sqcup Y$.
\end{proposition}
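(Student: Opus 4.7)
The plan is to prove the proposition by showing that $d_{p,\mu}$ is the pullback of the $L^p(\mu)$ seminorm under a single unified embedding $K \colon X \sqcup Y \to \lp(X \times Y, \mu)$, at which point all three pseudo-metric axioms are inherited for free.

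First I would define $K$ uniformly on $X \sqcup Y$ by
\[
K_z(x',y') = \begin{cases} d_X(z, x') & \text{if } z \in X, \\ d_Y(z, y') & \text{if } z \in Y, \end{cases}
\]
so that $K_z$ is a bounded (hence $L^p$) function on $X \times Y$ by compactness of $(X,d_X)$ and $(Y,d_Y)$. The central claim to verify is that
\[
d_{p,\mu}(z_1, z_2) = \|K_{z_1} - K_{z_2}\|_{p,\mu}
\]
for every pair $z_1, z_2 \in X \sqcup Y$. There are three cases. When both points lie in $X$, the integrand depends only on the $X$-coordinate, and the marginal property $(\pi_X)_\ast \mu = \alpha$ reduces the $\mu$-integral to the $\alpha$-integral, recovering $d_{p,\alpha}$ as already noted in \eqref{E:marginal}. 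The symmetric argument handles the $Y$-$Y$ case. The mixed case is exactly the definition \eqref{E:distmu}.

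Once the unified formula $d_{p,\mu}(z_1,z_2) = \|K_{z_1}-K_{z_2}\|_{p,\mu}$ is in place, the three pseudo-metric axioms follow immediately from the corresponding properties of the $L^p(\mu)$ seminorm: non-negativity and $d_{p,\mu}(z,z)=0$ are trivial; symmetry follows since $|a-b|^p = |b-a|^p$; and the triangle inequality is Minkowski's inequality applied to $K_{z_1}-K_{z_3} = (K_{z_1}-K_{z_2}) + (K_{z_2}-K_{z_3})$ in $\lp(X\times Y,\mu)$.

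There is no real obstacle here; the only subtlety is making sure that the single embedding $K$ really does simultaneously encode all three of $d_{p,\alpha}$, $d_{p,\beta}$, and the cross-distance, and this is precisely where the marginal conditions on $\mu$ are used. I would present the proof as essentially a one-line identification plus Minkowski, noting explicitly that $d_{p,\mu}$ is only a pseudo-metric (distinct points $z_1 \neq z_2$ can give $K_{z_1} = K_{z_2}$ in $\lp(\mu)$, for instance when $\alpha$ or $\beta$ has proper support), which is why the proposition claims only a pseudo-metric rather than a metric.
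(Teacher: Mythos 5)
Your proof is correct and follows essentially the same route as the paper: the authors also realize $d_{p,\mu}$ as the pullback of the $\lp(\mu)$ norm under the embedding $z \mapsto K_z$ (using the marginal conditions exactly as you do, via their equations \eqref{E:marginal} and \eqref{E:distmu}) and then invoke Minkowski's inequality for the triangle inequality. Your version merely makes the unified embedding on $X \sqcup Y$ explicit, which the paper leaves implicit; no substantive difference.
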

\begin{proof}
The distance $d_{p,\mu}$ is clearly symmetric, so it suffices to verify the
triangle inequality, which is valid because $d_{p,\mu}$  is induced by an
$\lp$-norm. Indeed, suppose $x_1, x_2 \in X$ and $y \in Y$. Then, by
\eqref{E:marginal}, \eqref{E:distmu} and the Minkowski inequality, we have that
\begin{equation}
\begin{split}
d_{p,\mu} (x_1,y) = \|K_{x_1} - K_{y}\|_{p, \mu} &\leq \|K_{x_1} - K_{x_2}\|_{p, \mu}
+ \|K_{x_2} - K_{y}\|_{p, \mu} \\
&= d_{p,\mu} (x_1,x_2) + d_{p,\mu} (x_2,y) \,.
\end{split}
\end{equation}
Other cases may be verified in an analogous manner.
\end{proof}

\begin{definition} \label{D:distortion}
Let $\mm{X}{\alpha}, \mm{Y}{\beta}$ be $mm$-spaces, $(\phi, \psi) \in \Pi (X,Y)$
a pairing between $(X,d_X)$ and $(Y, d_Y)$, and $\mu \in
\Gamma (\mm{X}{\alpha},\mm{Y}{\beta})$ a coupling. We define three distortions
associated with $\phi$, $\psi$ and $\mu$:
\begin{itemize}
\item[(i)] $D(\phi, \mu) := \sup_{x \in X} d_{p, \mu} (x, \phi (x))$ and
$D(\psi, \mu) := \sup_{y \in Y} d_{p, \mu} (\psi(y), y)$;
\item[(ii)] $D(\phi, \psi, \mu) := \max \{D(\phi, \mu), D(\psi, \mu)\}$.
\end{itemize}
\end{definition}

\begin{definition} \label{D:match1}
Let $\mm{X}{\alpha}, \mm{Y}{\beta}$ be $mm$-spaces, $\varepsilon > 0$ and
$p \geq 1$. An $\varepsilon$-matching (of order $p$) between
$\mm{X}{\alpha}$ and $\mm{Y}{\beta}$ is a pairing $(\phi, \psi) \in \Pi (X,Y)$
and a coupling $\mu \in \Gamma (\alpha,\beta)$ such that:
\begin{itemize}
\item[(i)] $D(\phi, \psi, \mu) \leq \varepsilon$;
\item[(ii)] There is a $2 \varepsilon$-homotopy from the identity map $I_{C_X}$ to
$\psi_c \circ \phi_c \colon C_X \to C_X$ over
$\sigma^\ast_{p,\mm{X}{\alpha}} \colon C_X \to \real$;
\item[(iii)]  There is a $2 \varepsilon$-homotopy from the identity map $I_{C_Y}$ to
$\phi_c \circ \psi_c \colon C_Y \to C_Y$ over $\sigma^\ast_{p, \mm{Y}{\beta}} \colon C_Y \to \real$.
\end{itemize}
We write $\mm{X}{\alpha} \sim_\varepsilon \mm{Y}{\beta}$ to indicate
that there is an $\varepsilon$-matching between $\mm{X}{\alpha}$ and $\mm{Y}{\beta}$
and define
\begin{equation}
\delta_p (\mm{X}{\alpha}, \mm{Y}{\beta}) = 
\inf \{ \varepsilon \,|\,  \mm{X}{\alpha} \sim_\varepsilon \mm{Y}{\beta}\} \,.
\end{equation}
For any $p \geq 1$, $\delta_p$ induces a pseudo-metric on $\gw$.

\end{definition}

The next lemma shows that, for probability measures defined on the same
metric space, the Wasserstein distance gives an upper bound for $\delta_p$.
We denote the collection of all Borel probability measures on a compact metric space
$(X, d_X)$ by $\mathcal{P} (X)$ and the Wasserstein metric of order $p \geq 1$
on $\mathcal{P} (X)$ by $w_p$. 
%---------
\begin{lemma} \label{L:wass}
For any $\alpha, \beta \in \mathcal{P} (X)$,
$\delta_p (\mm{X}{\alpha}, \mm{X}{\beta}) \leq w_p (\alpha, \beta)$.
\end{lemma}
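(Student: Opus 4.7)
The plan is to construct, for each $\varepsilon > w_p(\alpha,\beta)$, an explicit $\varepsilon$-matching in the sense of Definition \ref{D:match1} between $\mm{X}{\alpha}$ and $\mm{X}{\beta}$. Since both $mm$-spaces live on the same underlying metric space $(X,d_X)$, the natural candidate for the pairing is $\phi = \psi = I_X$, so that $\phi_c = \psi_c = I_{C_X}$. The natural candidate for the coupling is an (almost) optimal Wasserstein coupling $\mu \in \Gamma(\alpha,\beta)$ with
\[
\Bigl(\int_{X \times X} d_X(x',y')^p\, d\mu(x',y')\Bigr)^{1/p} < \varepsilon.
\]

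With these choices, conditions (ii) and (iii) in Definition \ref{D:match1} hold automatically: since $\psi_c \circ \phi_c = I_{C_X}$, the constant homotopy $H(z,s) \equiv z$ is a $0$-homotopy (hence a $2\varepsilon$-homotopy) from $I_{C_X}$ to itself over $\sigma^\ast_{p,\mm{X}{\alpha}}$, and similarly for $\mm{X}{\beta}$. So the only real content is to bound the distortion $D(\phi,\psi,\mu)$.

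For the distortion bound, I would unwind the definition of $d_{p,\mu}$ from \eqref{E:distmu}: for $x \in X$ viewed in the $\alpha$-copy and $\phi(x) = x$ viewed in the $\beta$-copy,
\[
d_{p,\mu}(x,\phi(x))^p = \int_{X \times X} \bigl| d_X(x,x') - d_X(x,y') \bigr|^p\, d\mu(x',y').
\]
The reverse triangle inequality $|d_X(x,x') - d_X(x,y')| \leq d_X(x',y')$ then yields
\[
d_{p,\mu}(x,\phi(x)) \leq \Bigl(\int_{X \times X} d_X(x',y')^p\, d\mu(x',y')\Bigr)^{1/p} < \varepsilon,
\]
uniformly in $x$. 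The identical computation handles $d_{p,\mu}(\psi(y),y)$, so $D(\phi,\psi,\mu) < \varepsilon$, establishing (i).

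Putting these pieces together, $\mm{X}{\alpha} \sim_\varepsilon \mm{X}{\beta}$ for every $\varepsilon > w_p(\alpha,\beta)$, and taking the infimum gives $\delta_p(\mm{X}{\alpha},\mm{X}{\beta}) \leq w_p(\alpha,\beta)$. I do not anticipate any substantive obstacle: the choice $\phi = \psi = I_X$ trivialises the topological/homotopical part of the matching, and the only analytic input is the reverse triangle inequality combined with the definition of $w_p$ as an infimum over couplings.
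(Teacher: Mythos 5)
Your proposal is correct and follows essentially the same route as the paper's proof: take the trivial pairing $(I_X, I_X)$ together with an (almost) optimal Wasserstein coupling, bound the distortion via the reverse triangle inequality, and observe that the homotopy conditions are trivially satisfied. The only cosmetic difference is that the paper uses a coupling that exactly realizes $w_p(\alpha,\beta)$ rather than an $\varepsilon$-approximate one.
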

\begin{proof}
Let $\mu \in \Gamma (\alpha, \beta)$ be a coupling that realizes
$w_p (\alpha, \beta)$; that is, 
\begin{equation}
\left( \int_X d^p_X (x,x') \, d \mu (x, x') \right)^{1/p}= w_p (\alpha, \beta) \,.
\end{equation}
By the triangle inequality, the trivial pairing $(\phi, \psi) = (I_X, I_X)$ between
$\mm{X}{\alpha}$ and $\mm{X}{\beta}$ satisfies
\begin{equation}
\begin{split}
d_{p, \mu} (x, \phi(x)) &=
\left( \int_{X \times X}
|d_X(x, x_1) - d_X (x, x_2)|^p \, d\mu (x_1, x_2) \right)^{1/p} \\
&\leq \left( \int_{X \times X}
d^p_X(x_1, x_2)  \, d\mu (x_1, x_2) \right)^{1/p} =
w_p (\alpha, \beta) \,.
\end{split}
\end{equation}
Thus, condition (i) in Definition \ref{D:match1} is satisfied for
$\varepsilon = w_p (\alpha, \beta)$. Conditions (ii) and (iii)
are trivially satisfied by this pairing. Therefore, 
%\begin{equation}
$\delta_p (\mm{X}{\alpha}, \mm{X}{\beta})
\leq w_p (\alpha, \beta)$.
%\end{equation}
\end{proof}

\subsection{Stability and Consistency} \label{S:consistency}

One of our main goals is to study the shape of a
$mm$-space $\mm{X}{\alpha}$ via the persistent homology of the sublevel set
filtration of $C_X$ induced by the cone on the centrality function. As an
intermediate step, we map $(\gw, \delta_p)$ to the functional space
$(\fgh, r_\infty)$. For $p \geq 1$, let $\ecc{p} \colon \gw \to \fgh$
be the mapping given by
\begin{equation}
\ecc{p} (\mm{X}{\alpha}) = (X, \tau_X, \sigma_{p,\mm{X}{\alpha}}) \,,
\end{equation}
where $\tau_X$ is the topology underlying the metric space $(X, d_X)$ and
$\sigma_{p,\mm{X}{\alpha}}$ is the $p$-centrality function of $\mm{X}{\alpha}$.
We denote by $\homol^\ast_i (\Theta_p (\mm{X}{\alpha}))$
the persistence module for $i$-dimensional homology of the
cone $\cone (\ecc{p} (\mm{X}{\alpha}))$. If $(X, d_X)$ is triangulable,
there is a well defined persistence diagram
$PD^\ast_i (\Theta_p (\mm{X}{\alpha}))$ associated with this persistence module.

We begin our investigation of stability by showing that $p$-centrality is stable
with respect to $\delta_p$. To simplify notation, we write
\begin{equation} \label{E:theta}
\Theta_p (\mm{X}{\alpha}) = \mm{X}{\alpha,p} \,.
\end{equation}

\begin{proposition} \label{P:stab1}
Let $p \geq 1$. If $\mm{X}{\alpha}, \mm{Y}{\beta} \in \gw$ are $mm$-spaces, then
\[
\hat{r}_\infty (\mm{X}{\alpha,p} , \mm{Y}{\beta,p}) =  \var (\mathbb{C}(\mm{X}{\alpha,p}) ,\mathbb{C}(\mm{Y}{\beta,p}))
\leq \delta_p (\mm{X}{\alpha}, \mm{Y}{\beta}) \,.
\]
\end{proposition}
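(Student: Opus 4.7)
The equality $\hat{r}_\infty(\mm{X}{\alpha,p},\mm{Y}{\beta,p})=\var(\mathbb{C}(\mm{X}{\alpha,p}),\mathbb{C}(\mm{Y}{\beta,p}))$ is immediate from the definition \eqref{E:rinf1}, so the whole content is in the inequality $\var(\mathbb{C}(\mm{X}{\alpha,p}),\mathbb{C}(\mm{Y}{\beta,p})) \leq \delta_p(\mm{X}{\alpha},\mm{Y}{\beta})$. The plan is to fix $\varepsilon > \delta_p(\mm{X}{\alpha},\mm{Y}{\beta})$, choose an $\varepsilon$-matching $(\phi,\psi,\mu)$ between $\mm{X}{\alpha}$ and $\mm{Y}{\beta}$ in the sense of Definition \ref{D:match1}, and show that $(\phi_c,\psi_c)$ is a strong $\varepsilon$-matching between the cones $\mathbb{C}(\mm{X}{\alpha,p})$ and $\mathbb{C}(\mm{Y}{\beta,p})$ in the sense of Definition \ref{D:match}(ii). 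Taking infimum over such $\varepsilon$ then yields the claim.

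The crucial step is verifying that $(\phi,\psi)$ is a \emph{strong} $\varepsilon$-pairing between the ft-spaces $\mm{X}{\alpha,p}=(X,\tau_X,\sigma_{p,\mm{X}{\alpha}})$ and $\mm{Y}{\beta,p}=(Y,\tau_Y,\sigma_{p,\mm{Y}{\beta}})$. The key observation is that since $\mu$ has $\alpha$ and $\beta$ as marginals, the centrality values can be rewritten via the coupling as $\sigma_{p,\mm{X}{\alpha}}(x)=\|K_x\|_{p,\mu}$ and $\sigma_{p,\mm{Y}{\beta}}(y)=\|K_y\|_{p,\mu}$, exactly as in \eqref{E:marginal}. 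The reverse triangle inequality for the $\lp$-norm then gives
\begin{equation*}
\bigl|\sigma_{p,\mm{Y}{\beta}}(\phi(x))-\sigma_{p,\mm{X}{\alpha}}(x)\bigr|
=\bigl|\|K_{\phi(x)}\|_{p,\mu}-\|K_x\|_{p,\mu}\bigr|
\leq \|K_{\phi(x)}-K_x\|_{p,\mu}=d_{p,\mu}(x,\phi(x))\leq\varepsilon,
\end{equation*}
using Definition \ref{D:distortion} and condition (i) of Definition \ref{D:match1}. The symmetric bound for $\psi$ follows identically, so $(\phi,\psi)$ is a strong $\varepsilon$-pairing.

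Next, I invoke Lemma \ref{L:cone}(i) to lift this to a strong $\varepsilon$-pairing $(\phi_c,\psi_c)$ between $\mathbb{C}(\mm{X}{\alpha,p})$ and $\mathbb{C}(\mm{Y}{\beta,p})$. For the homotopy requirements (a), (b) of Definition \ref{D:match}, I do not need to construct anything new: conditions (ii) and (iii) of Definition \ref{D:match1} were designed so that the $2\varepsilon$-homotopies from $I_{C_X}$ to $\psi_c\circ\phi_c$ over $\sigma^\ast_{p,\mm{X}{\alpha}}$ and from $I_{C_Y}$ to $\phi_c\circ\psi_c$ over $\sigma^\ast_{p,\mm{Y}{\beta}}$ already exist by hypothesis. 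Combining the strong $\varepsilon$-pairing with these homotopies gives $\mathbb{C}(\mm{X}{\alpha,p})\approx_\varepsilon \mathbb{C}(\mm{Y}{\beta,p})$, so $\var(\mathbb{C}(\mm{X}{\alpha,p}),\mathbb{C}(\mm{Y}{\beta,p}))\leq\varepsilon$, and taking infimum completes the proof.

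The only real obstacle is the reverse-triangle-inequality step; everything else is bookkeeping that plugs together the previously proved Lemma \ref{L:cone} and the built-in homotopy conditions of Definition \ref{D:match1}. The definitions were crafted precisely so that this reduction works smoothly.
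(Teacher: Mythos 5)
Your proposal is correct and follows essentially the same route as the paper's proof: the paper's key estimate \eqref{E:mink} is exactly your reverse-triangle-inequality step (Minkowski applied to $K_x$ and $K_{\phi(x)}$ viewed in $\lp(\mu)$ as in \eqref{E:marginal} and \eqref{E:distmu}), followed by Lemma \ref{L:cone}(i) and the observation that the homotopy conditions are built into Definition \ref{D:match1}.
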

%------
\begin{proof}
Fix $\varepsilon > \delta_p (\mm{X}{\alpha}, \mm{Y}{\beta})$ and let
$(\phi, \psi) \in \Pi (X,Y)$ and $\mu \in \Gamma(\alpha, \beta)$
induce an $\varepsilon$-matching between $\mm{X}{\alpha}$ and
$\mm{Y}{\beta}$.  We show that $(\phi_c, \psi_c)$ is a strong $\varepsilon$-matching
between the cones on the $ft$-spaces $\mm{X}{\alpha,p}$  and $\mm{Y}{\beta,p}$.
Write
\begin{equation} \label{E:alpha}
\sigma_{p,\mm{X}{\alpha}} (x) = \left( \int_X d_X^p (x,x') \, d \alpha (x') \right)^{1/p}=
\left( \int_{X \times Y} d_X^p (x,x') \,d\mu (x',y') \right)^{1/p}
\end{equation}
and
\begin{equation} \label{E:beta}
\sigma_{p,\mm{Y}{\beta}}  (y) = \left( \int_Y d_Y^p (y, y') \, d \beta (y') \right)^{1/p}=
\left( \int_{X \times Y} d_Y^p (y, y') \,d\mu (x',y') \right)^{1/p} \,.
\end{equation}
By \eqref{E:alpha}, \eqref{E:beta} and the Minkowski inequality, 
we have that
\begin{equation} \label{E:mink}
\begin{split}
|(\sigma_{p,\mm{Y}{\beta}}  \circ \phi) (x) - \sigma_{p,\mm{X}{\alpha}} (x)|
&\leq \left( \int_{X \times Y} |d_X (x, x') -
d_Y (\phi(x), y')|^p \, d\mu (x', y') \right)^{1/p} \\
&= d_{p, \mu} (x, \phi(x)) \leq \varepsilon \,,
\end{split}
\end{equation}
$\forall x \in X$. Similarly, 
$|(\sigma_{p,\mm{X}{\alpha}}  \circ \psi) (y) - \sigma_{p,\mm{Y}{\beta}}  (y)| \leq \varepsilon$.
Thus, $(\phi, \psi)$ is a strong $\varepsilon$-pairing between 
$\mm{X}{ \sigma_{p,\mm{X}{\alpha}}}$ and $\mm{Y}{ \sigma_{p,\mm{Y}{\beta}}}$.
By Lemma \ref{L:cone}(i), $(\phi_c, \psi_c)$ is a strong $\varepsilon$-pairing
between  the cones on $\mm{X}{\alpha,p}$ and $\mm{Y}{\beta,p}$. Since $(\phi, \psi)$
and $\mu$ induce an $\varepsilon$-matching between $\mm{X}{\alpha}$ and
$\mm{Y}{\beta}$, conditions (a) and (b) in Definition \ref{D:match}
are satisfied by $(\phi_c, \psi_c)$. Hence, 
\begin{equation}
\hat{r}_\infty (\mm{X}{\alpha,p} , \mm{Y}{\beta,p}) = \var (\cone(\mm{X}{\alpha,p}),
\cone(\mm{Y}{\beta,p})) \leq \varepsilon \,.
\end{equation}
Taking infimum over $\varepsilon$-matchings, the result follows.
\end{proof}

%------

\begin{theorem}[Stability of Persistent Homology] \label{T:bottle}
Let $p \geq 1$ and $\mm{X}{\alpha}, \mm{Y}{\beta} \in \gw$.
\begin{itemize}
\item[(i)] $d_I \left((\homol^\ast_i (\mm{X}{\alpha,p}), 
\homol^\ast_i  (\mm{Y}{\beta, p}) \right) \leq
\delta_p (\mm{X}{\alpha}, \mm{Y}{\beta})$.
\item [(ii)] If $(X, d_X)$ and $(Y, d_Y)$ are triangulable, then
\[
d_B \left((PD^\ast_i (\mm{X}{\alpha,p}) ,
PD^\ast_i  (\mm{Y}{\beta, p}) \right) \leq
\delta_p (\mm{X}{\alpha}, \mm{Y}{\beta}) \,.
\]
\end{itemize}
\end{theorem}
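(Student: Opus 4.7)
The plan is to simply chain together the results already established. Both parts of the theorem factor through the functional topological space stability results of Section~\ref{S:homology} via the map $\Theta_p \colon \gw \to \fgh$.

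For part (i), I would apply Corollary~\ref{C:stab} to the pair of $ft$-spaces $\Theta_p(\mm{X}{\alpha}) = \mm{X}{\alpha,p}$ and $\Theta_p(\mm{Y}{\beta}) = \mm{Y}{\beta,p}$ to obtain
\[
d_I \left(\homol^\ast_i (\mm{X}{\alpha,p}),\homol^\ast_i  (\mm{Y}{\beta, p}) \right) \leq \hat{r}_\infty (\mm{X}{\alpha,p}, \mm{Y}{\beta,p}) \,,
\]
and then invoke Proposition~\ref{P:stab1}, which bounds $\hat{r}_\infty (\mm{X}{\alpha,p}, \mm{Y}{\beta,p})$ by $\delta_p (\mm{X}{\alpha}, \mm{Y}{\beta})$. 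Composition yields the desired inequality.

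For part (ii), the triangulability hypothesis on $(X, d_X)$ and $(Y, d_Y)$ ensures that the cones $C_X$ and $C_Y$ are triangulable as well, hence the persistence modules $\homol^\ast_i$ are tame and well-defined persistence diagrams $PD^\ast_i$ exist. I would then apply Corollary~\ref{C:diagram} (whose proof combines Corollary~\ref{C:stab} with the Isomorphism Theorem \eqref{E:iso}) in place of Corollary~\ref{C:stab} to obtain
\[
d_B \left(PD^\ast_i (\mm{X}{\alpha,p}), PD^\ast_i  (\mm{Y}{\beta, p}) \right) \leq \hat{r}_\infty (\mm{X}{\alpha,p}, \mm{Y}{\beta,p}) \,,
\]
and finish again with Proposition~\ref{P:stab1}.

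There is no real obstacle here: all the substantive work has been done upstream. Proposition~\ref{P:stab1} is where the translation from $mm$-space matchings (using couplings $\mu$ and the pseudo-metric $d_{p,\mu}$) into strong $ft$-space matchings happens, using Minkowski's inequality to control the discrepancy of centrality functions. Corollaries~\ref{C:stab} and~\ref{C:diagram} then convert $\hat{r}_\infty$-closeness into interleaving and bottleneck bounds. The present theorem is therefore essentially a formal consequence, and the proof should be only two or three lines long, citing these three prior results in sequence.
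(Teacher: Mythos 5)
Your proposal is correct and follows exactly the paper's own argument: the paper's proof is the one-line citation of Proposition~\ref{P:stab1}, Corollary~\ref{C:stab}, and Corollary~\ref{C:diagram}, which is precisely the chain you describe. Your additional remarks on where the substantive work happens (the Minkowski estimate in Proposition~\ref{P:stab1} and the tameness needed for part (ii)) are accurate glosses, not deviations.
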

\begin{proof}
The statements follow from Proposition \ref{P:stab1}, Corollary \ref{C:stab}
and Corollary \ref{C:diagram}.
\end{proof}

\begin{corollary} \label{C:diagrams}
Let $\alpha, \beta \in \mathcal{P} (X)$ with $(X, d_X)$ triangulable. Then,
\[
d_B \left((PD^\ast_i (\mm{X}{\alpha,p}) ,
PD^\ast_i  (\mm{X}{\beta, p}) \right)\leq
w_p (\alpha, \beta) \,,
\]
for any $p \geq 1$.
\end{corollary}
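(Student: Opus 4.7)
The plan is to chain together two results already established in the paper. First, Lemma \ref{L:wass} controls the $\delta_p$ pseudo-metric on $\gw$ by the Wasserstein distance whenever both measures are supported on the same metric space: for $\alpha, \beta \in \mathcal{P}(X)$ one has $\delta_p(\mm{X}{\alpha}, \mm{X}{\beta}) \leq w_p(\alpha, \beta)$. Second, Theorem \ref{T:bottle}(ii) bounds the bottleneck distance between the persistence diagrams of the coned centrality filtrations by $\delta_p$, whenever the underlying metric spaces are triangulable.

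So the proof I would write is essentially one line: apply Theorem \ref{T:bottle}(ii) to the pair $\mm{X}{\alpha}, \mm{X}{\beta}$, which requires the common triangulability hypothesis already in the statement, and then bound the resulting $\delta_p(\mm{X}{\alpha}, \mm{X}{\beta})$ from above by $w_p(\alpha, \beta)$ using Lemma \ref{L:wass}. Concatenating the two inequalities yields
\[
d_B\!\left(PD^\ast_i(\mm{X}{\alpha,p}),\, PD^\ast_i(\mm{X}{\beta,p})\right) \leq \delta_p(\mm{X}{\alpha}, \mm{X}{\beta}) \leq w_p(\alpha, \beta),
\]
which is exactly the desired estimate.

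There is no real obstacle here: both ingredients are already in hand, and the corollary is a direct corollary in the strict sense. The only thing to check is that the triangulability hypothesis on $(X, d_X)$ is enough to invoke Theorem \ref{T:bottle}(ii), which it is, since the theorem is stated for $(X,d_X)$ and $(Y,d_Y)$ both triangulable, and in our setting these are the same space $(X,d_X)$.
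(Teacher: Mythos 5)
Your proposal is correct and matches the paper's own proof exactly: the paper likewise derives the corollary by combining Lemma \ref{L:wass} with Theorem \ref{T:bottle}. Nothing further is needed.
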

\begin{proof}
%Let $\mu \in \Gamma (\alpha, \beta)$ be a coupling that realizes
%$w_p (\alpha, \beta)$; that is, 
%\begin{equation}
%\left( \int_X d^p_X (x,x') \, d \mu (x, x') \right)^{1/p}= w_p (\alpha, \beta) \,.
%\end{equation}
%The trivial pairing $(\phi, \psi) = (I_X, I_X)$ between
%$\mm{X}{\alpha}$ and $\mm{X}{\beta}$ satisfies
%\begin{equation}
%\begin{split}
%d_{p, \mu} (\mm{X}{\alpha}, \mm{X}{\beta}) &=
%\left( \int_{X \times Y}
%|d_X(x, x_1) - d_X (x, x_2)|^p \, d\mu (x_1, x_2) \right)^{1/p} \\
%&\leq \left( \int_{X \times Y}
%d^p_X(x_1, x_2)  \, d\mu (x_1, x_2) \right)^{1/p} =
%w_p (\alpha, \beta) \,.
%\end{split}
%\end{equation}
%Thus, condition (i) in Definition \ref{D:match1} is satisfied for
%$\varepsilon = w_p (\alpha, \beta)$. Conditions (ii) and (iii)
%are trivially satisfied. Therefore, 
%\begin{equation}
%\delta_p (\Theta_p (\mm{X}{\alpha}), \Theta_p (\mm{X}{\beta}))
%\leq w_p (\alpha, \beta) \,.
%\end{equation}
This follows from Lemma \ref{L:wass} and Theorem \ref{T:bottle}.
\end{proof}

For a $mm$-space $\mm{X}{\alpha}$ and $p \geq 1$,
let $d_p^\ast (\alpha)$ be the upper $p$-Wasserstein dimension of
$\mm{X}{\alpha}$, as defined in \cite{weedbach}.

\begin{theorem}[Convergence of Persistence Diagrams]
Let $\mm{X}{\alpha}$ be a $mm$-space with $(X, d_X)$ triangulable.

\begin{itemize}
\item[(a)] (Consistency)
If $\{x_i\}_{i=1}^\infty$ are i.i.d. $X$-valued random variables with
distribution $\alpha$, then
\[
\lim_{n \to \infty} d_B \left( PD^\ast_i (\mm{X}{\alpha_n,p}) ,
PD^\ast_i  (\mm{X}{\alpha,p}) \right) =0
\]
almost surely. Here $\alpha_n$ denotes the empirical measure
$\alpha_n = \sum_{i=1}^n \delta_{x_i}/n$.
%---------
\item[(b)] (Rate of Convergence) . If $s > d_p^\ast (\alpha)$, then there is
$C>0$ such that 
\[
\mathbb{E} [d_B \left( PD^\ast_i (\mm{X}{\alpha_n,p}) ,
PD^\ast_i  (\mm{X}{\alpha,p}) \right)] \leq C \mathrm{diam} (X)\, n^{-1/s} \,,
\]
where the constant $C$ depends only on $s$ and $p$.
\end{itemize}
\end{theorem}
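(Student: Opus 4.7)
The plan is to reduce both claims to statements about the convergence of the empirical measure $\alpha_n$ to $\alpha$ in $p$-Wasserstein distance, which then invokes known results from the literature.

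First, observe that Corollary \ref{C:diagrams} applied to the pair $(\alpha_n, \alpha) \in \mathcal{P}(X) \times \mathcal{P}(X)$ yields the pointwise bound
\begin{equation*}
d_B\!\left(PD^\ast_i(\mm{X}{\alpha_n,p}),\, PD^\ast_i(\mm{X}{\alpha,p})\right) \leq w_p(\alpha_n, \alpha)
\end{equation*}
for every realization of the sample. Thus both parts of the theorem will follow once we control $w_p(\alpha_n, \alpha)$ in the appropriate mode of convergence.

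For part (a), I would invoke the classical result that, on a compact metric space $(X, d_X)$, the empirical measures of an i.i.d. sample from $\alpha$ converge almost surely to $\alpha$ in the weak topology (a Varadarajan-type theorem), and that on a compact space weak convergence is equivalent to convergence in $w_p$ for any $p \geq 1$ (since $d_X^p$ is bounded and continuous, so $p$-th moments are automatically uniformly integrable). Hence $w_p(\alpha_n, \alpha) \to 0$ almost surely, and the bound above gives the almost sure convergence of the bottleneck distances.

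For part (b), I would appeal directly to the main rate-of-convergence result of Weed--Bach \cite{weedbach}, which states that if $s > d_p^\ast(\alpha)$, then there exists a constant $C = C(s,p)$ such that
\begin{equation*}
\mathbb{E}[w_p(\alpha_n, \alpha)] \leq C \, \mathrm{diam}(X) \, n^{-1/s}.
\end{equation*}
Taking expectations in the pointwise stability bound and using monotonicity of $\mathbb{E}$, the desired inequality follows immediately. There is no genuine technical obstacle here: the whole argument is a transfer of existing Wasserstein convergence results through the stability theorem proved in this paper (Theorem \ref{T:bottle}), combined with the specialization to a common ground space given by Lemma \ref{L:wass}. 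The only mild point to check is that $(X, d_X)$ triangulable ensures the persistence diagrams $PD^\ast_i$ are well defined, so that the bottleneck distance on the left-hand side makes sense for every $n$.
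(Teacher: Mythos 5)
Your proposal is correct and follows essentially the same route as the paper: both reduce the theorem to the bound $d_B \leq w_p(\alpha_n,\alpha)$ from Corollary \ref{C:diagrams}, then invoke Varadarajan's theorem (plus the fact that $w_p$ metrizes weak convergence on a compact space) for part (a) and the Weed--Bach moment estimate for part (b). Your added remark justifying why weak convergence upgrades to $w_p$-convergence on a compact space is a welcome elaboration of a step the paper states without comment, but it is not a different argument.
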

\begin{proof} 
(a) Varadarajan's Theorem on covergence of empirical measures guarantees that
$\alpha_n$ conveges weakly to $\alpha$ almost surely \cite{dudley02}. Since $w_p$
metrizes weak convergence of probability measures, we have that
$\lim_{n \to \infty} w_p (\alpha_n, \alpha) = 0$ almost surely. The statement
now follows from Corollary \ref{C:diagrams}.

\smallskip

\noindent (b) The inequality follows from Corollary \ref{C:diagrams}, applied
to $\alpha$ and $\alpha_n$, combined with the estimate
$\mathbb{E} [w_p (\alpha_n, \alpha)] \leq C n^{-1/s}$ by Weed and Bach
under the assumption that $\rm{diam}\, (X) \leq 1$ \cite{weedbach}.
\end{proof}

%----------------

\section{Shape of Data on Riemannian Manifolds} \label{S:manifolds}

This section specializes to data or probability measures on a fixed 
Riemannian manifold $M$. Let $(M,g)$ be a closed (compact without boundary),
connected Riemannian manifold. We denote the geodesic distance on $M$ by $d_M$,
the volume measure by $\nu$, and $V_M = \text{vol}\, (M)$. Diffusion distances associated
with the heat kernel yield a 1-parameter family of metric spaces $(M, d_t)$, $t > 0$, that may
be regarded as a multi-scale relaxation of $(M, d_M)$. We show that the persistence
diagrams $\gamma_\alpha^i (t) := PD_i^\ast (M, d_t, \alpha)$  give a continuous path in
diagram space. Moreover, the path is stable with respect to the Wasserstein distance,
thus leading to a stable, multi-scale descriptor of the shape of $\mm{M}{\alpha}$.

Let $K \colon M \times M \times (0, \infty) \to \real^+$ be the heat kernel on $M$
\cite{grigoryan}. For each $t > 0$, let $K_t \colon M \times M \to \real^+$ be
given by $K_t (x,y) = K(x,y,t)$. Define the diffusion distance of order $p \geq 1$,
at scale $t>0$, by
\begin{equation} \label{E:diff}
\begin{split}
d_t (x,x') &:= \left( \int_M |K_t (x,z) - K_t(x',z)|^p \, d\nu (z) \right)^{1/p} \\
&= \| K_t(x,\cdot)- K_t (x', \cdot)\|_p \,,
\end{split}
\end{equation}
the $\mathbb{L}_p$-distance between $K_t(x,\cdot)$ and $K_t (x', \cdot)$ with
respect to the volume measure. (We omit $p$ in the notation of the distance
because it is fixed throughout.) There is a constant $C_t  (M) > 0$, which varies
continuously with $t$, such that
\begin{equation} \label{E:lips}
|K_t (x,z) - K_t(x',z)| \leq C_t (M) d_M(x,x') \,,
\end{equation}
$\forall x, x', z \in M$ \cite[eq. (2.12)]{kasue-kumura}. It follows from \eqref{E:diff} and \eqref{E:lips} that
\begin{equation} \label{E:lipschitz}
d_t (x,x') \leq C_t (M) (V_M)^{1/p}\, d_M (x, x') \,,
\end{equation}
$\forall x,x' \in M$. In particular, this implies that the identity map
$I_M \colon (M,d_M) \to (M,d_t)$ is a homeomorphism. Thus,
$\forall s, t > 0$, the pair $(\phi, \psi) = (I_M, I_M)$ defines a pairing
between $(M, d_s)$ and $(M, d_t)$.

Given $\alpha \in \borel(M)$, we adopt the notation
$\mm{M}{\alpha} = (M, d_M, \alpha)$ and $\mm{M}{\alpha}^t =
(M, d_t, \alpha)$. The path $\gamma_\alpha \colon (0, \infty)  \to
\gw$, given by $\gamma_\alpha (t) = \mm{M}{\alpha}^t$, gives a
multi-scale relaxation of $\mm{M}{\alpha}$.

\begin{proposition} \label{P:path}
The path $\gamma_\alpha \colon (0, \infty) \to \gw$ satisfies
\[
\delta_p (\mm{M}{\alpha}^s, \mm{M}{\alpha}^{t}) \leq C_{M,p}\cdot \big|s^{-\frac{d}{2}} - t^{-\frac{d}{2}}\big|,
\]
$\forall s,t\in(0,\infty)$, where $d$ is the dimension of $M$, $C_{M,p}>0$ is a constant
that depends on $p$, on the diameter and volume of $M$, on a lower bound on its Ricci
curvature, and on $d$. In particular, $\gamma_\alpha$ is continuous with
respect to the pseudo-metric $\delta_p$ on $\gw$. 
\end{proposition}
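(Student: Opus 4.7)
The plan is to exhibit an explicit matching between $\mm{M}{\alpha}^s$ and $\mm{M}{\alpha}^{t}$ and control its distortion. I take the identity pairing $(\phi,\psi) = (I_M, I_M)$, which is valid since \eqref{E:lipschitz} shows that $I_M$ is a homeomorphism between $(M,d_s)$ and $(M,d_t)$ for any $s,t>0$, together with the diagonal coupling $\mu \in \Gamma(\alpha,\alpha)$, namely the pushforward of $\alpha$ under $x \mapsto (x,x)$. Conditions (ii) and (iii) of Definition \ref{D:match1} hold trivially: since $\phi_c = \psi_c = I_{C_M}$, the constant homotopy is a $0$-homotopy from $I_{C_M}$ to $\psi_c \circ \phi_c = \phi_c \circ \psi_c = I_{C_M}$ over either centrality function.

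The content is in verifying condition (i), which amounts to bounding $D(\phi,\psi,\mu) = \sup_{x\in M} d_{p,\mu}(x,x)$. With the diagonal coupling, $d_{p,\mu}(x,x)^p = \int_M |d_s(x,x') - d_t(x,x')|^p\, d\alpha(x')$. Writing $d_t(x,x') = \|K_t(x,\cdot) - K_t(x',\cdot)\|_p$ as in \eqref{E:diff} and applying the reverse triangle inequality followed by the ordinary triangle inequality in $\mathbb{L}_p(M,\nu)$ gives the pointwise estimate
\[
|d_s(x,x') - d_t(x,x')| \leq \|K_s(x,\cdot) - K_t(x,\cdot)\|_p + \|K_s(x',\cdot) - K_t(x',\cdot)\|_p.
\]
Applying Minkowski's inequality to the outer $\mathbb{L}_p(\alpha)$ norm in $x'$ then yields the reduction
\[
d_{p,\mu}(x,x) \leq 2\sup_{y\in M} \|K_s(y,\cdot) - K_t(y,\cdot)\|_p.
\]

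The heart of the argument is therefore the purely heat-kernel estimate
\[
\sup_{y\in M}\|K_s(y,\cdot) - K_t(y,\cdot)\|_p \leq \tfrac{1}{2}\, C_{M,p}\,\bigl|s^{-d/2} - t^{-d/2}\bigr|.
\]
To prove it I would assume $s<t$ without loss of generality, use the heat equation $\partial_u K_u = \Delta K_u$ to represent $K_t - K_s = \int_s^t \Delta K_u\, du$, and invoke Li--Yau-type estimates for closed Riemannian manifolds with Ricci curvature bounded below to obtain a uniform pointwise bound of the form $|\Delta_z K_u(y,z)| \leq A_M\, u^{-1-d/2}$. Integrating in $u$ gives $\sup_{y,z}|K_t(y,z) - K_s(y,z)| \leq (2A_M/d)\, |s^{-d/2} - t^{-d/2}|$, and the crude passage $\|\cdot\|_p \leq V_M^{1/p}\|\cdot\|_\infty$ on $(M,\nu)$ produces the announced constant with the stated dependence on $p$, $\mathrm{diam}(M)$, $V_M$, the Ricci lower bound, and $d$. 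The main obstacle is precisely this last estimate: extracting the $u^{-1-d/2}$ scaling of $\Delta K_u$ with an explicit constant depending only on the listed geometric quantities requires genuine heat-kernel analysis (Li--Yau, Cheng--Li--Yau, or Kasue--Kumura-type bounds), in contrast to the soft arguments that sufficed in the earlier sections.
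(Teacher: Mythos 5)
Your proposal follows essentially the same route as the paper's proof: the identity pairing with the diagonal coupling $\Delta_\ast(\alpha)$, trivially satisfied homotopy conditions, a Minkowski/triangle-inequality reduction to $\sup_{y}\|K_s(y,\cdot)-K_t(y,\cdot)\|_p$, and an integrated bound of order $\tau^{-1-d/2}$ on the time derivative of the heat kernel. The only divergence is in how that derivative bound is sourced --- the paper expands $K_\tau$ spectrally and cites the estimate $\sum_{j\geq 1}\lambda_j e^{-\lambda_j\tau}\varphi_j^2(x)\leq A_M\,\tau^{-(d/2+1)}$ of B\'erard--Besson--Gallot, while you invoke Li--Yau/Kasue--Kumura-type bounds via $\partial_\tau K_\tau=\Delta K_\tau$; both are standard and yield the same constant dependence, so your argument is sound.
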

\begin{proof}
We first establish the existence of a constant $C_{M,p}>0$ with the stated
properties such that
\begin{equation} \label{E:distortionQ}
|d_s (x,y) - d_t (x,y)| \leq C_{M,p}  \big|s^{-\frac{d}{2}} - t^{-\frac{d}{2}}\big| \,,
\end{equation}
$\forall x,y \in M$, and all $s,t\in(0,\infty)$. Write
\begin{equation}
\begin{split}
d_s (x,y) - d_{t} (x,y) &= \left( \int_M |K_s (x,z) - K_s(y,z)|^p \, 
d\nu (z) \right)^{1/p} \\
&- \left( \int_M |K_{t} (x,z) - K_{t}(y,z)|^p \, d\nu (z) \right)^{1/p} .
\end{split}
\end{equation}
By Minkowski's inequality,
\begin{equation} \label{E:mink}
\begin{split}
|d_s (x,y) - d_{t} (x,y)| &\leq \left( \int_M |K_s (x,z) - K_{t} (x,z)|^p \, 
d\nu (z) \right)^{1/p} \\
&+ \left( \int_M |K_s (y,z) - K_{t}(y,z)|^p \, d\nu (z) \right)^{1/p} .
\end{split}
\end{equation}
Now we claim that 
\begin{equation}\label{eq:claim}
|K_s (x,z) - K_{t} (x,z)| < A_M' \,\big|s^{-\frac{d}{2}} - t^{-\frac{d}{2}}\big|,\end{equation}
for some constant $A_M'>0$ that only depends on the diameter, dimension, and a lower
bound on the Ricci curvature of $M$. Notice that (\ref{eq:claim}) immediately implies
(\ref{E:distortionQ}) for $C_{M,p} = (V_M)^{1/p}\, A'_M$.

Next, we show how to conclude the proof assuming (\ref{eq:claim}).  Let $(\phi, \psi) = (I_M, I_M)$
be the trivial pairing between $(M, d_s)$ and $(M, d_{t})$ and $\mu$ be the self-coupling of $\alpha$
given by $\mu = \Delta_\ast (\alpha)$, where $\Delta \colon M \to M \times M$ is the diagonal map.
Write
\begin{equation}
\begin{split}
\sup_{x \in M} d_{p, \mu} (x, \phi (x)) &= 
\sup_{x \in M} \left( \int_{M \times M} |d_s (x,y) - d_{t} (x,y')|^p \, d\mu (y,y')\right)^{1/p}\\
&= \sup_{x \in M} \left( \int_M |d_s (x,y) - d_{t} (x,y)|^p d \alpha (y) \right)^{1/p}\\
&\leq  C_{M,p}\,\big| s^{-\frac{d}{2}} - t^{-\frac{d}{2}} \big| \,,
\end{split}
\end{equation}
where the last inequality follows from \eqref{E:distortionQ}.  Similarly,
$$\sup_{y \in M} d_{p, \mu} (y, \psi (y))  \leq C_{M,p}\,  \big|s^{-\frac{d}{2}} - t^{-\frac{d}{2}}\big|.$$
The conditions on homotopies for a $C_{M,p}  \big|s^{-\frac{d}{2}} - t^{-\frac{d}{2}}\big|$-matching
are trivially satisfied. Therefore, $\delta_p (\mm{M}{\alpha}^s, \mm{M}{\alpha}^{t}) \leq C_{M,p} 
\big|s^{-\frac{d}{2}} - t^{-\frac{d}{2}}\big|$.

Now we prove (\ref{eq:claim}). We use an argument similar to \cite[(2.12)]{kasue-kumura} and
\cite[Proposition 5.2]{dgw-spec}. First recall that \cite[Theorem 3 (iii)]{bbg} guarantees that there exists
a constant $A_M>0$ such that for all $x\in M$ and $\tau>0$,
\begin{equation}
\sum_{j\geq 1}\lambda_j e^{-\lambda_j \tau} \varphi_j^2(x)\leq A_M \,\tau^{-(\frac{d}{2}+1)} \,, 
\end{equation}
where $\lambda_j$ ($j=0,1,2,\ldots$) are the eingevalues of the Laplace-Beltrami operator
on $M$, and $\{\varphi_j\}_{j=0}^\infty$ is an orthonormal basis of $\mathbb{L}^2(M)$ consisting of
associated eigenfunctions. Then, for $x,x'\in M$ and $\tau>0$, using the spectral expansion of
$K_\tau(x,x')$ and the Cauchy-Schwartz inequality, we have
\begin{equation}
\begin{split}
\bigg|\frac{\partial K_\tau(x,x')}{\partial \tau}\bigg| &\leq \sum_{j\geq 1} \lambda_j e^{-\lambda_j \tau} 
|\varphi_j(x)|\,|\varphi_j(x')| \\
&\leq \bigg( \sum_{j\geq 1}\lambda_j e^{-\lambda_j \tau}\varphi_j^2(x) \bigg)^{1/2}
\bigg(\sum_{j\geq 1} \lambda_j e^{\lambda_j \tau}\varphi_j^2(x')\bigg)^{1/2} \,,
\end{split}
\end{equation}
which by \cite[Theorem 3 (iii)]{bbg} implies that $$\bigg|\frac{\partial K_\tau(x,x')}{\partial s}\bigg| 
\leq A_M\, \tau^{-(\frac{d}{2}+1)}.$$
Now, we may write
\begin{equation}
|K_s(x,z) - K_t(x,z)| \leq \left|\int_{s}^t \bigg|\frac{\partial K_\tau(x,x')}{\partial \tau}\bigg|\,d\tau \right|
\leq A'_M |s^{-\frac{d}{2}} - t^{-\frac{d}{2}}| \,,
\end{equation}
for some constant $A_M'>0$ only depending on the diameter, dimension, and a lower
bound on the Ricci curvature of $M$. This concludes the proof.
\end{proof}
%%%%%%%%%%%%%%%%%%%%%%%%%%%%%%%%%%%%%%%%%%%%

Before proceeding to a topological reduction of the path $\gamma_\alpha$, we 
investigate the stability of $\gamma_\alpha$ with respect to the Wasserstein distance.
Let $\chemin$ be the space of all continuous paths $(0, \infty) \to (\gw, \delta_p)$
equipped with the compact-open topology. The assignment
$\mm{M}{\alpha} \mapsto \gamma_\alpha$ defines a mapping
$R \colon \borel (M) \to \chemin$.

\begin{lemma} \label{L:tlips}
Let $t>0$ and $\alpha, \beta \in \borel (M)$. Then, 
\[
\delta_p (\mm{M}{\alpha}^t, \mm{M}{\beta}^t) \leq
C_t (M) (V_M)^{1/p} \, w_p (\alpha, \beta) \,,
\]
where the Wasserstein distance is taken with respect to geodesic
distance $d_M$.
\end{lemma}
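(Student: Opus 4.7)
The plan is to reduce this to Lemma \ref{L:wass} applied on the metric space $(M, d_t)$ and then control the resulting Wasserstein distance (which would be computed with respect to $d_t$) by the Wasserstein distance with respect to $d_M$, using the Lipschitz bound \eqref{E:lipschitz} that compares $d_t$ to $d_M$.

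More precisely, first I would invoke Lemma \ref{L:wass} with the underlying metric space taken to be $(M, d_t)$ rather than $(M, d_M)$. This yields
\[
\delta_p(\mm{M}{\alpha}^t, \mm{M}{\beta}^t) \leq w_p^{(t)}(\alpha, \beta),
\]
where $w_p^{(t)}$ denotes the $p$-Wasserstein distance computed with respect to $d_t$. Note that nothing in Lemma \ref{L:wass} requires the underlying metric to be the geodesic one, so this application is legitimate.

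Next I would compare $w_p^{(t)}$ with $w_p$. Given any coupling $\mu \in \Gamma(\alpha, \beta)$, the pointwise Lipschitz bound \eqref{E:lipschitz} $d_t(x,x') \leq C_t(M)(V_M)^{1/p} d_M(x,x')$ gives
\[
\left(\int_{M\times M} d_t^p(x,x')\, d\mu(x,x')\right)^{1/p} \leq C_t(M)(V_M)^{1/p} \left(\int_{M\times M} d_M^p(x,x')\, d\mu(x,x')\right)^{1/p}.
\]
Taking the infimum over couplings $\mu \in \Gamma(\alpha,\beta)$ on both sides yields
\[
w_p^{(t)}(\alpha, \beta) \leq C_t(M)(V_M)^{1/p}\, w_p(\alpha, \beta).
\]
Chaining this with the previous inequality gives exactly the claimed bound.

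There is no real obstacle here: the argument is a two-step comparison, where the only substantive input is the Lipschitz inequality \eqref{E:lipschitz} that was already established in the derivation leading up to Proposition \ref{P:path}. The minor point to flag is that Lemma \ref{L:wass} was stated with a generic metric space $(X, d_X)$, so applying it with $(M, d_t)$ in place of $(X, d_X)$ is immediate and does not require re-proof.
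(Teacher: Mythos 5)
Your proof is correct, but it is organized differently from the paper's. The paper proves the bound in one shot: it takes the trivial pairing $(\phi,\psi)=(I_M,I_M)$ together with the coupling $\mu$ that is optimal for $w_p(\alpha,\beta)$ \emph{with respect to $d_M$}, and bounds $d_{p,\mu}(x,\phi(x))$ directly via $|d_t(x,x')-d_t(x,y')|\leq d_t(x',y')\leq C_t(M)(V_M)^{1/p}\,d_M(x',y')$, i.e.\ the triangle inequality for $d_t$ followed by \eqref{E:lipschitz}, then notes the homotopy conditions are trivial. You instead factor the argument through the intermediate quantity $w_p^{(t)}$: first Lemma \ref{L:wass} applied to the compact metric space $(M,d_t)$ (which is legitimate, since that lemma is stated for an arbitrary compact metric space and the paper has already observed that $I_M\colon(M,d_M)\to(M,d_t)$ is a homeomorphism, so $(M,d_t)$ is compact), and then the elementary comparison $w_p^{(t)}\leq C_t(M)(V_M)^{1/p}\,w_p$ obtained by integrating \eqref{E:lipschitz} against any coupling. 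The two routes use exactly the same ingredients --- the trivial pairing, the triangle inequality for $d_t$, and the Lipschitz bound --- but your version is more modular and reuses Lemma \ref{L:wass} as a black box, at the cost of implicitly invoking the $d_t$-optimal coupling rather than the $d_M$-optimal one and of introducing the auxiliary Wasserstein distance $w_p^{(t)}$. Both are valid; the paper's direct construction avoids having to discuss $w_p^{(t)}$ at all.
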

\begin{proof}
Set $\varepsilon = C_t (M)(V_M)^{1/p} \, w_p (\alpha, \beta)$ and
let $\mu$ be a coupling between $\alpha$ and $\beta$ that realizes
$w_p (\alpha, \beta)$. By \eqref{E:lipschitz},
the pairing $(\phi, \psi) = (I_M, I_M)$ satisfies
\begin{equation}
\begin{split}
|d_t (x,x') - d_t (\phi(x), y')| &= |d_t (x,x') - d_t (x, y')| \\
&\leq d_t (x',y') \leq \,C_t (M)(V_M)^{1/p}  \, d_M (x', y') \,.
\end{split}
\end{equation}
Thus,
\begin{equation}
\begin{split}
\sup_{x \in M} \Big( \int_{M \times M} |d_t (x,x') &- d_t (\phi(x), y')|^p \,
d\mu (x', y') \Big)^{1/p} \\
&\leq C_t (M)(V_M)^{1/p}  \, w_p (\alpha, \beta) = \varepsilon\,.
\end{split}
\end{equation}
An analogous estimate is valid for $\psi$ by a similar argument. Since the
pairing $(\phi, \psi)$ is trivial, the conditions on homotopies in Definition \ref{D:match1}
are satisfied. Hence, $\mu$ and $(\phi,  \psi)$ induce an $\varepsilon$-matching
between $\mm{M}{\alpha}^t$ and $\mm{M}{\beta}^t$. This implies that
$\delta_p (\mm{M}{\alpha}^t, \mm{M}{\beta}^t) \leq \varepsilon$, 
proving the lemma.
\end{proof}

\begin{theorem} \label{T:relax}
The mapping $R \colon \borel (M) \to \chemin$ is
continuous with respect to the $p$-Wasserstein distance on $\borel (M)$
and the compact-open topology on $\chemin$ induced by $\delta_p$.
\end{theorem}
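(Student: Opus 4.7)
The plan is to unpack what continuity with respect to the compact-open topology means and then apply Lemma \ref{L:tlips} together with the continuity of the constant $C_t(M)$ in $t$ already noted just after \eqref{E:lips}.

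First I would recall that, since $(\gw,\delta_p)$ is a pseudo-metric space, the compact-open topology on $\chemin$ coincides with the topology of uniform convergence on compact subsets of $(0,\infty)$. Hence, to prove continuity of $R$ at a point $\alpha\in\borel(M)$, it suffices to show that for every compact $K\subset(0,\infty)$ and every sequence $\alpha_n\to\alpha$ in the $p$-Wasserstein distance (equivalently, for every $\varepsilon>0$ there exists $\eta>0$ such that $w_p(\alpha,\beta)<\eta$ implies the sup below is $<\varepsilon$), one has
\[
\sup_{t\in K}\delta_p(\mm{M}{\alpha}^t,\mm{M}{\alpha_n}^t)\longrightarrow 0.
\]

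Next I would apply Lemma \ref{L:tlips} pointwise in $t$ to get
\[
\sup_{t\in K}\delta_p(\mm{M}{\alpha}^t,\mm{M}{\alpha_n}^t)\;\leq\;\Big(\sup_{t\in K}C_t(M)\Big)\,(V_M)^{1/p}\,w_p(\alpha,\alpha_n).
\]
Because $C_t(M)$ depends continuously on $t$ (as recorded after \eqref{E:lips}) and $K$ is compact, the quantity $C_K:=\sup_{t\in K}C_t(M)$ is finite. Therefore the right-hand side is bounded by $C_K\,(V_M)^{1/p}\,w_p(\alpha,\alpha_n)$, which tends to $0$ as $w_p(\alpha,\alpha_n)\to 0$. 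This yields uniform convergence on every compact subset of $(0,\infty)$, which is precisely continuity of $R$ in the compact-open topology.

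There is no real obstacle here: the proof is essentially a compactness-plus-Lipschitz packaging of Lemma \ref{L:tlips}. The only subtle point is the equivalence between the compact-open topology and uniform convergence on compacts, which I would either invoke as a standard fact about spaces of maps from a locally compact Hausdorff space to a metric space, or, alternatively, I would verify directly by showing that for any subbasic open set $\{\gamma\in\chemin\mid\gamma(K)\subset U\}$ containing $\gamma_\alpha$ (with $K\subset(0,\infty)$ compact and $U\subset\gw$ open), the compactness of $\gamma_\alpha(K)$ inside $U$ combined with the uniform estimate above produces a $w_p$-neighborhood of $\alpha$ mapped into this subbasic set.
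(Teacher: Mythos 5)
Your proposal is correct and follows essentially the same route as the paper: both reduce continuity in the compact-open topology to uniform estimates over a compact set of scales $t$, then apply Lemma \ref{L:tlips} together with the continuity of $t \mapsto C_t(M)$ to bound $\sup_t \delta_p(\mm{M}{\alpha}^t,\mm{M}{\beta}^t)$ by a constant times $w_p(\alpha,\beta)$. The only cosmetic difference is that the paper works with intervals $[a,b]$ rather than general compact subsets, which suffices since every compact subset of $(0,\infty)$ lies in such an interval.
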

\begin{proof}
It suffices to show that given $\varepsilon > 0$ and an interval $[a,b]$,
$0 < a < b < \infty$, there is $\delta > 0$ such that
$\delta_p(\mm{M}{\alpha}^t,\mm{M}{\beta}^t) < \varepsilon$, for every $t \in [a,b]$,
provided that $w_p (\alpha, \beta) < \delta$. Set $\delta = \varepsilon /(V_M)^{1/p} A$,
where $A = \max_{t \in [a,b]} C_t (M)$. Then, Lemma \ref{L:tlips} implies that
$\delta_p(\mm{M}{\alpha}^t,\mm{M}{\beta}^t) < \varepsilon$, as desired.
\end{proof}

Let $(\diagram, d_B)$ be the space of persistence diagrams equipped
with the bottleneck distance. Given $\mm{M}{\alpha}$ and an integer $i \geq 0$, 
the path $\gamma_\alpha^i \colon (0, \infty) \to \diagram$ defined
by $\gamma_\alpha^i (t) = PD^\ast_i (\gamma_\alpha (t))$ gives a
multi-scale representation of the $i$-dimensional homology of $\mm{M}{\alpha}$.
The continuity of $\gamma_\alpha^i $ follows from Theorem \ref{T:bottle}
and Proposition \ref{P:path}.

We denote by $\chemind$ the space of all continuous path $\gamma \colon (0, \infty) 
\to \diagram$ equipped with the compact-open topology.

\begin{corollary}
Let $M$ be a closed Riemannian manifold and $i \geq 0$ an integer.
The mapping $S_i \colon \borel (M) \to \chemind$ given by
$S_i (\alpha) = \gamma_\alpha^i$ is continuous  with respect to the $p$-Wasserstein
distance on $\borel (M)$ and the compact-open topology on $\chemind$.
\end{corollary}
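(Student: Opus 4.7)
The plan is to deduce continuity of $S_i$ by stringing together the bottleneck stability bound from Theorem \ref{T:bottle} with the Wasserstein estimate of Lemma \ref{L:tlips}, essentially parroting the proof of Theorem \ref{T:relax} but composed with the persistence-diagram map. The substantive work has already been done; what remains is bookkeeping.

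First I would unpack continuity of $S_i$ at a fixed $\alpha \in \borel(M)$. Since $(\diagram, d_B)$ is a metric space and $(0, \infty)$ is locally compact Hausdorff, the compact-open topology on $\chemind$ agrees with the topology of uniform convergence on compact subsets. So it suffices to show that for every $\varepsilon > 0$ and every compact interval $[a,b] \subset (0, \infty)$, there exists $\delta > 0$ such that $w_p(\alpha, \beta) < \delta$ implies
\[
\sup_{t \in [a,b]} d_B(\gamma_\alpha^i(t), \gamma_\beta^i(t)) < \varepsilon.
\]

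Next, for each fixed $t \in [a,b]$ and any $\beta \in \borel(M)$, Theorem \ref{T:bottle}(ii) (applicable because a closed Riemannian manifold is triangulable) followed by Lemma \ref{L:tlips} yields
\[
d_B(\gamma_\alpha^i(t), \gamma_\beta^i(t)) \leq \delta_p(\mm{M}{\alpha}^t, \mm{M}{\beta}^t) \leq C_t(M)\, (V_M)^{1/p}\, w_p(\alpha, \beta).
\]
Since $t \mapsto C_t(M)$ is continuous on $(0, \infty)$, the constant $A := \max_{t \in [a,b]} C_t(M)$ is finite. Setting $\delta = \varepsilon / \bigl(A\,(V_M)^{1/p}\bigr)$, the inequality above gives a uniform bound in $t$, completing the proof.

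There is no serious obstacle here; the only point worth flagging is that the bottleneck stability of Theorem \ref{T:bottle}(ii) is what lets us pass from the continuity statement for $R$ in Theorem \ref{T:relax} to one for $S_i$, and this requires the triangulability hypothesis, which is automatic for $M$. Continuity of $\gamma_\alpha^i$ itself, already noted in the text before the corollary, follows from exactly the same two-step estimate applied to $(\mm{M}{\alpha}^s, \mm{M}{\alpha}^t)$ using Proposition \ref{P:path} in place of Lemma \ref{L:tlips}, so the target space $\chemind$ is well defined.
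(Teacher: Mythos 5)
Your proposal is correct and follows essentially the same route as the paper, which simply composes the continuity of $R$ from Theorem \ref{T:relax} with the bottleneck stability bound of Theorem \ref{T:bottle}(ii) (using triangulability of $M$); you merely unpack the proof of Theorem \ref{T:relax} by invoking Lemma \ref{L:tlips} directly instead of citing the theorem. The chain of estimates and the choice of $\delta$ are identical to the paper's argument.
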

\begin{proof}
This is an immediate consequence of Theorems \ref{T:relax}
and \ref{T:bottle}, and the fact that $M$ is triangulable \cite{cohenetal07,desilvaetal}.
\end{proof}

%----------------

\section{Summary and Discussion} \label{S:final}

We developed a variant of persistent homology for functional data on compact topological
spaces ($ft$-spaces) and probability measures on compact metric spaces ($mm$-spaces).
Our approach highlights the topology or geometry of the signals, downplaying properties of
their domains in regions where the signals are subdued. In the functional setting, this was achieved
via a cone construction that trivializes the topology of a signal once it starts to fade away. The
construction for $mm$-spaces was essentially reduced to the functional case via centrality functions
that, among other things, have the virtue of mitigating undesired influences of data outliers on persistent
homology. We proved strong stability theorems for the proposed variant of persistent homology
with respect to metrics for which two objects are close if the corresponding signals have similar
behavior in regions where they are strong, regardless of their behavior in areas where they
fade away. Thus, in these metrics, two $ft$-spaces or $mm$-spaces may be close even
if their domains have different homotopy types. For $mm$-spaces, we also investigated
consistency and estimation of persistent homology from data.

This paper employed a 1-parameter approach to persistent homology, dealing primarily with
the theoretical aspects of the aforementioned problems. Many issues related to the computability
of the model will be treated in a forthcoming manuscript that will adopt a 2-parameter
reformulation. Some of the computational challenges relate to the fact that the
domain $X$ may be very complex or even unknown, its topological and geometric properties
having to be inferred from metric data. This suggests combining the present functional approach
with a multi-scale Vietoris-Rips type discretization of metric domains, naturally
leading to a 2-parameter formulation of persistent topology. 

To close, operations such coning functional data and truncating persistence
modules or barcodes seem to be special manifestations of a general construction
that can be performed at the level of persistence categories in a functorial manner.
This is another angle of the problems treated in this paper that will
deferred to a future investigation.

\bibliographystyle{plain}
\bibliography{mmspaces.bib}

\end{document}